\newenvironment{proof}{\noindent\textbf{Proof\ }}{\hspace*{\fill}$\Box$\medskip}
\newtheorem{Definition}{Definition}
\newtheorem{Lemma}{Lemma}
\newtheorem{Proposition}{Proposition}
\newtheorem{Theorem}{Theorem}
\newenvironment{MSC}{%
\smallbreak
\noindent \textbf{2010\ Mathematics Subject Classification\,:}}
\newenvironment{keywords}{%
\noindent\textbf{Key words and phrases\,:}\itshape}
\begin{document}


\title{Schur type poly-Bernoulli numbers}
\author{Naoki Nakamura and Maki Nakasuji}
\date{}
\maketitle

\begin{abstract} 
The poly-Bernoulli numbers and its relative are defined by the generating series using the polylogarithm series, and we call them type $B$ and $C$, respectively.
As a generalization of these poly-Bernoulli numbers, we introduce Schur type poly-Bernoulli numbers and investigate their properties.
First, we define a generalization of Arakawa-Kaneko multiple zeta functions and obtain their expression in terms of   Schur type Bernoulli numbers. 
Next, under the restriction to the hook type, we define a generalization of Kaneko-Tsumura multiple zeta functions and obtain similar expression in terms of 
Schur type Bernoulli numbers. Lastly, we study more properties such as a recurrence formula, a relation formula between Bernoulli numbers and a description in terms of the Stirling numbers. 
\begin{MSC}
 11M41, 
 05E05. 
\end{MSC} 
\begin{keywords}
 Multiple zeta functions, 
 Schur multiple zeta functions,
 Bernoulli numbers
\end{keywords}
\end{abstract}

%
%
%
%

\section{Introduction}
The poly-Bernoulli numbers ${ \mathbb B}_n^{(k)}$ for $n\geq 0 , k\in {\mathbb Z}$, which are a natural generalization of the classical Bernoulli numbers, were first introduced by Kaneko (\cite{K}).
These numbers and the related numbers ${ \mathbb C}_n^{(k)}$ are defined by the generating series using the polylogarithm series $Li_k(t)$ as follows:
$$\frac{Li_k(1-e^{-t})}{1-e^{-t}}=\sum_{n=0}^{ \infty}{ \mathbb B}_n^{(k)}\frac{t^n}{n!}, \quad
\frac{Li_k(1-e^{-t})}{e^{t}-1}=\sum_{n=0}^{ \infty}{ \mathbb C}_n^{(k)}\frac{t^n}{n!},
$$
where the polylogarithm series $Li_k(t)$ is the formal power series $\displaystyle{\sum_{n=1}^{\infty}\frac{t^n}{n^k}}$ for any integer $k$.
For $t=1$, the polylogarithm series reduces to the special value of the Riemann zeta function, that is, $Li_k(1)=\zeta(k)$.
Arakawa and Kaneko \cite{AKbook}  introduced the multi-logarithmic function  ${Li}_{k_1, k_2, \cdots k_r} (z)$ defined below \eqref{polylog}, and for the purpose of establishing a connection between the multiple zeta values and the poly-Bernoulli numbers, they gave an integral representation and the analytic continuation of single-variable multiple zeta functions, and
investigated the properties of ${ \mathbb B}_n^{(k)}$, such as the closed formulas in terms of the Stirling numbers of the second kind and the 
relation between ${ \mathbb B}_n^{(k)}$ and ${ \mathbb C}_n^{(k)}$
in \cite{AKbook}. 
They further introduced a new kind of multiple zeta functions and a generalization for multiple indices called Arakawa-Kaneko multiple zeta functions:
$$\xi_k(s)=\frac{1}{\Gamma(s)}\int_0^{ \infty}\frac{Li_k(1-e^{-t})}{e^t-1}t^{s-1}dt,
$$
\begin{equation}\label{AKMZ}
\xi_{k_1, \cdots, k_r}(s)=\frac{1}{\Gamma(s)}\int_0^{ \infty}\frac{Li_{k_1, \cdots, k_r}(1-e^{-t})}{e^t-1}t^{s-1}dt,
\end{equation}
where $k$ and $k_1, \cdots, k_r \geq 1$ are integers and
\begin{eqnarray}\label{polylog} 
 Li_{k_1, k_2, \cdots k_r} (z) = \sum_{0 < m_1 < m_2 <\cdots < m_r} \frac{z^{m_r}}{m_1^{k_1} m_2^{k_2} \cdots m_r^{k_r} }.
\end{eqnarray}
They showed that $\xi_k(s)$ and $\xi_{k_1, \cdots, k_r}(s)$ can be analytically continued to entire functions of $s\in {\mathbb C}$, and the values at nonpositive integers interpolate  ${\mathbb C}_n^{(k)}$.
Kaneko and Tsumura (\cite{KT}) defined and studied a twin sibling of the Arakawa-Kaneko multiple zeta functions:
\begin{equation}\label{KTMZ}
\eta_{k_1, \cdots, k_r}(s)=\frac{1}{\Gamma(s)}\int_0^{ \infty}\frac{Li_{k_1, \cdots, k_r}(1-e^{t})}{1-e^t}t^{s-1}dt.
\end{equation}
This function also can be analytically continued to an entire functions on the whole complex plane and the values at nonpositive integers interpolate
${\mathbb B}_n^{(k)}$.
These functions have since been studied by many authors (e.g., \cite{CC}, \cite{Y}).

Nakasuji, Phuksuwan and Yamasaki \cite{NPY} defined the Schur multiple zeta functions $\zeta_{ \lambda}(\bf s)$, which are a generalization of the Euler-Zagier type multiple zeta and zeta-star functions.
These functions are defined as sums over combinatorial objects called semi-standard Young tableaux of shape $\lambda$ (see Definition \ref{defschur}).
In the present article, we introduce Schur type polylogarithm series $Li_{\bf k}^{ \lambda}({\bf z})$ for $\mathbf{z} = (z_1, z_2, \cdots z_c)$ such that $| z_i | < 1$ $(1 \leq i \leq c)$ , which reduce to the Schur multiple zeta values when ${\bf z} \to {\bf 1}$: 
\begin{equation}\label{schurpolylog}
Li_{{\mathbf{k}}}^\lambda ({\bf z}) := \sum_{(m_{ij)} \in SSYT(\lambda)} \frac{{ \bf z}^{m_c}}{\prod_{(i,j)\in D(\lambda)}m_{ij}^{k_{ij}}},
 \end{equation}
where ${\bf k}\in T(\lambda, {\mathbb N})$ and $\displaystyle{\mathbf{z}^{m_c} = \prod_{(i, j) \in C(\lambda) } z_{ij}^{m_{ij}}}$.
For an explanation of the notation $SSYT( \lambda)$, $D(\lambda)$, $T(\lambda, {\mathbb N})$ and  $C( \lambda)$,
refer to the next section. And we define Schur type Bernoulli numbers using $Li_{\bf k}^\lambda (\mathbf{z})$ as an analogue of \cite{AKbook} as follows :

\begin{equation}\label{defBernoulli}
\frac{Li_\mathbf{k}^{\lambda}(1-e^{-z_1}, 1-e^{-z_2}, \cdots, 1-e^{-z_c})}{(1-e^{-z_1})(1-e^{-z_2})\cdots (1-e^{-z_c})}
=\sum_{m_1, \cdots, m_c=0}^{\infty}{\mathbb B}_{m_1, \cdots, m_c}^{\lambda, \mathbf{k}}\frac{z_1^{m_1}\cdots z_c^{m_c}}{m_1!\cdots m_c!},
\end{equation}
\begin{equation}\label{defBernoulliC}
\frac{Li_\mathbf{k}^{\lambda}(1-e^{-z_1}, 1-e^{-z_2}, \cdots, 1-e^{-z_c})}{(e^{z_1}-1)(e^{z_2}-1)\cdots (e^{z_c}-1)}
=\sum_{m_1, \cdots, m_c=0}^{\infty}{\mathbb C}_{m_1, \cdots, m_c}^{\lambda, \mathbf{k}}\frac{z_1^{m_1}\cdots z_c^{m_c}}{m_1!\cdots m_c!}.
\end{equation}
where ${\bf z}=(z_1, z_2, \cdots, z_c)$, $c=|C(\lambda)|$ and ${\bf k}\in T(\lambda, { \mathbb N})$.

Furthermore, we define a generalization of the Arakawa-Kaneko multiple zeta functions to Schur type functions and will obtain the expression the generalizations using Schur type Bernoulli numbers (Theorem \ref{xi}).
If we restrict them to the hook type $\lambda = (h, 1^{\ell - 1})$, then we can define a generalization of Kaneko-Tsumura multiple zeta functions to Schur type functions and will obtain the expression in terms of Schur type Bernoulli numbers (Theorem \ref{eta}). Further we will obtain a generalization of the known results for the properties of Bernoulli numbers such as 
a recurrence formula, a relation formula between Bernoulli numbers ${\mathbb B}$ and ${\mathbb C}$, and a description in terms of the Stirling numbers (Section 4).

\section{Schur multiple zeta functions}
 A  partition 
$ \lambda = ( \lambda_1 , \ldots , \lambda_r) $ of a positive integer $n$ is a non-increasing sequence of positive integers such that 
$$
| \lambda | = \sum^{r}_{i=1} \lambda_i = n .
$$
We call ${ | \lambda | }$ the weight of $\lambda$ and $ l(\lambda) = r$ the length of $\lambda$, and write $ \lambda \vdash n $. And we sometimes express it as $ \lambda = ( n^{m_n(\lambda)}, \ldots , 2^{m_2(\lambda)} , 1^{m_1(\lambda)}) $, where $m_i(\lambda)$ is the multiplicity of $i$ in $\lambda$.
We arrange the $\lambda_i$ boxes left-aligned in the $i$th row from top to bottom. 
For example, 
for $\lambda = (3,1)$,
$
\ytableausetup{boxsize=normal}  
\begin{ytableau}
 \ &   & \\
  \\
\end{ytableau}
$.
We identify $ \lambda \vdash n $ with the Young diagram $ D(\lambda) = \{ ( i , j ) \in {\mathbb Z}^2 | 1 \leqq i \leqq r , 1 \leqq j \leqq \lambda_1 \} $.
If there are $(i+1 , j) \notin D(\lambda) $ and $(i, j+1) \notin D(\lambda) $, we say that  $(i, j) \in D(\lambda) $ is a corner, and define $C(\lambda) \in D(\lambda)$ as the set of all corners of $\lambda$.
For example, 
for $\lambda = (4,3,3,1)$,
$
C((4,4,3,1)) = \{ (2,4), (3,3), (4,1) \}
$.
Let $X$ be a set. For a partition $\lambda$, $T = (t_{ij}) $ which is a $D(\lambda)$ putting $t_{ij} \in X$ into a box of $D(\lambda)$ is a Young tableau of shape $\lambda $ over $X$. 
Let $T(\lambda, X)$ be the set of all Young tableaux of shape $\lambda$ over $X$. We identify $T(\lambda, X)$ with $X^{|\lambda|}$. 
We call $T = (t_{ij})$ a semi-standard Young tableau, where we put positive integers $t_{ij}$  into the boxes such that they are weakly increasing in each row from left to right and are strictly increasing in each column from top to bottom. We denote the set of all semi-standard Young tableaux of shape $\lambda$ by SSYT($\lambda$) .
Using this notation, Nakasuji, Phuksuwan and Yamasaki \cite{NPY} introduced  Schur multiple zeta functions as follows.

\begin{Definition}\label{defschur}
For ${\bf s} = (s_{ij}) \in T(\lambda, \mathbb{C})$, we define
\begin{eqnarray} \label{MSZFs}
\zeta_\lambda({\bf s}) = \sum_{M \in SSYT(\lambda) } \frac{1}{M^{\bf s}},
\end{eqnarray}
where $\displaystyle{M^{\bf s} = \prod _{(i , j) \in D(\lambda)} m_{ij}^{s_{ij}} , M = (m_{ij}) \in \rm{SSYT}(\lambda)}$. Here, we set $\zeta_\lambda=1 $ for $\lambda = \phi $. We call $\zeta_\lambda({\bf s})$ a Schur multiple zeta function. 
\end{Definition}

\begin{Lemma}[ {\cite[Lemma 2.1]{NPY} }]  \label{abscon}
We define
\begin{eqnarray}
  W_\lambda = \left \{ 
  \mathbf{s} = (s_{ij}) \in T(\lambda, \mathbb{C})
 \middle | \begin{array}{l}
  {\rm Re}(s_{ij}) \in \mathbb{R}_{\geq 1}, (i,j) \in D(\lambda) \backslash C(\lambda) \\
   {\rm Re}(s_{ij}) \in \mathbb{R}_{ > 1}, (i,j) \in  C(\lambda) 
  \end{array} \right \}.
\end{eqnarray}
Then, $(\ref{MSZFs})$ converges absolutely for $\mathbf{s} \in W_\lambda$ .
\end{Lemma}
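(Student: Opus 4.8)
The plan is to establish absolute convergence directly, by arranging the sum over $SSYT(\lambda)$ as an iterated sum in which the corner entries are summed last. Since every summand satisfies $|M^{-\mathbf{s}}| = \prod_{(i,j)\in D(\lambda)} m_{ij}^{-\sigma_{ij}}$ with $\sigma_{ij} := \mathrm{Re}(s_{ij})$, and all these terms are positive, it suffices to bound $\sum_{M\in SSYT(\lambda)} \prod_{(i,j)\in D(\lambda)} m_{ij}^{-\sigma_{ij}}$, where by hypothesis $\sigma_{ij}\ge 1$ for $(i,j)\notin C(\lambda)$ and $\sigma_{ij}>1$ for $(i,j)\in C(\lambda)$. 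Put $\delta = \min_{(i,j)\in C(\lambda)}(\sigma_{ij}-1)>0$.

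First I would equip $D(\lambda)$ with the componentwise partial order, writing $(i,j)\preceq(i',j')$ when $i\le i'$ and $j\le j'$, and sum the variables $m_{ij}$ in the reverse of a linear extension of $\preceq$, so that $\preceq$-maximal boxes become the outermost sums. A box $(i,j)$ is $\preceq$-maximal precisely when $(i+1,j)\notin D(\lambda)$ and $(i,j+1)\notin D(\lambda)$, that is, precisely when it is a corner; hence the corner entries are the outermost variables. Any non-corner box $(i,j)$ has at least one neighbor among $(i,j+1),(i+1,j)$ lying in $D(\lambda)$, and since such a neighbor is $\succ(i,j)$ it is summed more outward and is already fixed when $m_{ij}$ is summed. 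The SSYT conditions then give either $m_{ij}\le m_{i,j+1}$ (weak increase along rows) or $m_{ij}<m_{i+1,j}$ (strict increase down columns), so in either case $m_{ij}$ is bounded above by a fixed neighbor value; I would record one such neighbor as the parent of $(i,j)$, producing a forest on $D(\lambda)$ whose roots are exactly the corners.

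The key estimate is that for $\sigma\ge 1$ one has $\sum_{m=1}^{N} m^{-\sigma}\le \sum_{m=1}^{N} m^{-1}\le 1+\log N$, and more generally $\sum_{m=1}^{N}(\log m)^{q}\,m^{-\sigma}\le C_q\,(1+\log N)^{q+1}$. Summing the variables in this order and applying this bound at each non-corner box, every non-corner summation transfers an accumulated power of a logarithm onto its parent, the exponent on any box being at most the number of boxes in its subtree and hence at most $|\lambda|$. After all non-corner variables are summed, the remaining sum over the corner entries is dominated by a finite sum of products $\prod_{c\in C(\lambda)}(\log m_c)^{p_c}m_c^{-\sigma_c}$ with $0\le p_c\le|\lambda|$; relaxing the residual order relations among the corners only enlarges their ranges to all of $\mathbb{Z}_{\ge 1}$ independently, so this is bounded by the product $\prod_{c\in C(\lambda)}\sum_{m=1}^{\infty}(\log m)^{p_c}m^{-\sigma_c}$. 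Each factor converges because $\sigma_c\ge 1+\delta>1$, proving that $(\ref{MSZFs})$ converges absolutely on $W_\lambda$.

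The main obstacle I anticipate is the bookkeeping in the third step: one must verify that summing the non-corner variables in the chosen order really does cascade all logarithmic factors onto the corner variables with uniformly bounded powers, and that the residual order relations among the corners can be dropped without affecting convergence. Making the parent assignment and the induction on the forest precise is where the argument needs the most care; the reduction to real parts and the single-variable estimate $\sum_{m\le N}(\log m)^q m^{-1}\ll (\log N)^{q+1}$ are routine.
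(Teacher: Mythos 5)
Your proof is correct, but note first that the paper you are being compared against contains no proof of this statement at all: the lemma is quoted from \cite[Lemma 2.1]{NPY}, so the only meaningful comparison is with the argument there and with the decomposition machinery the present paper recalls right after the lemma. Your route is a direct, self-contained estimate: reduce to real parts; observe that $SSYT(\lambda)$ is contained in the relaxed constraint set in which each non-corner box $(i,j)$ is only required to satisfy $m_{ij}\le m_{\mathrm{parent}(i,j)}$ for one chosen neighbor in $D(\lambda)$ (such a neighbor exists exactly because $(i,j)$ is not a corner, and the roots of the resulting forest are exactly the corners, which are the maximal boxes for the componentwise order); since all terms are positive, bounding the sum over the relaxed set suffices, and the elementary estimate $\sum_{m\le N}(\log m)^q m^{-\sigma}\le(1+\log N)^{q+1}$ for $\sigma\ge 1$ cascades up each tree by induction, with the log-exponent at any box bounded by its subtree size, until the corner sums $\sum_{m\ge 1}(\log m)^{p}m^{-\sigma_c}$ appear, which converge precisely because $\sigma_c>1$. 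I checked this bookkeeping (the step you flagged as the main risk) and it closes without difficulty. By contrast, the proof the paper defers to, and the one suggested by the identities \eqref{zetalambdas} and \eqref{zetalambdass} recalled in Section 2, is combinatorial: one partitions $SSYT(\lambda)$ according to the linear order pattern of its entries, writing $\zeta_\lambda(\mathbf{s})$ as a finite sum of Euler--Zagier multiple zeta (or zeta-star) functions whose indices are sums of the $s_{ij}$ along each constituent, the final index always absorbing a corner entry; each such series then converges by the classical criterion for multiple zeta(-star) functions. Your approach buys independence from that decomposition and from the classical criterion (your log-cascade in effect reproves it in the totally ordered case); the combinatorial approach buys more, namely the structural expression of $\zeta_\lambda$ as a $\mathbb{Z}$-linear combination of multiple zeta values, of which convergence on $W_\lambda$ is a byproduct.
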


Recall  that  Schur multiple zeta functions can be written as a linear combination of multiple zeta functions or multiple zeta-star functions. 

\begin{Definition}
For $\lambda $, $\it{F}$  is the set of all bijections $f: D(\lambda) \rightarrow  \{ 1,2, \cdots ,n \} $ that satisfying the following two conditions : \\
$\rm( \hspace{.18em} i \hspace{.18em})$  for all $i$, $f((i,j)) < f((i,j')) $ if and only if $j < j'$, \\
$\rm(\hspace{.08em}ii\hspace{.08em})$ for all $j$, $f((i,j)) < f((i',j)) $ if and only if $i < i'$.\\
For $T = (t_{ij}) \in T(\lambda,X)$,  we define 
\begin{eqnarray*}
V(T) = \{ ( t_{f^{-1}(1)}, t_{f^{-1}(2)}, \cdots , t_{f^{-1}(n)} ) \}.
\end{eqnarray*}
\end{Definition}


\begin{Definition} \label{in}
When X  has an addition +, we write $ { \bf w} \preceq T  $ for ${ \bf w} = (w_1, \ldots , w_n) \in X^m$ if there exists $(v_1, \cdots ,v_n ) \in V(T)$ satisfying the following:  \\
$\rm( \hspace{.18em} i \hspace{.18em})$ $w_k = v_{h_k} + v_{h_k+1} + \cdots + v_{h_k + l_k}$, \\
$\rm(\hspace{.08em}ii\hspace{.08em})$ for any $j$, there exist no $i$ and $i'$ such that $t_{ij}, t_{i'j} \in \{ v_{h_k} +, v_{h_k+1} , \cdots , v_{h_k + l_k} \}  $ for $i \neq i' $, \\
$\rm(\hspace{.08em}iii\hspace{.08em})$ $\bigsqcup_{k=1}^{m} \{ h_k, h_{k}+1, \ldots h_k + l_k \} = \{ 1,2, \ldots , n \}$. 
\end{Definition}
Then we have  
\begin{eqnarray}
\zeta_\lambda({\bf s}) &=& \sum_{{\bf t} \preceq {\bf s} } \zeta ({\bf t}),   \label{zetalambdas} \\
\zeta_\lambda({\bf s}) &=& \sum_{{\bf t} \preceq {\bf s}' } (-1)^{n-l({\bf t})} \zeta^{\star} ({\bf t}).  \label{zetalambdass}
\end{eqnarray} 
Here, $\zeta({\bf t})$ and $\zeta^{ \star}({\bf t})$ are original multiple zeta and multiple zeta-star functions, respectively:
$$
\zeta(s_1, s_2, \cdots, s_r)=\sum_{1\leq m_1<m_2<\cdots < m_r}\frac{1}{m_1^{s_1}m_2^{s_2}\cdots m_r^{s_r}},
$$
and
$$
\zeta^{\star}(s_1, s_2, \cdots, s_r)=\sum_{1\leq m_1\leq m_2\leq \cdots \leq m_r}\frac{1}{m_1^{s_1}m_2^{s_2}\cdots m_r^{s_r}}.
$$
And, ${\bf s}' =(s_{ij}')\in W_{\lambda'}$, where $\lambda'$ is the conjugate of $\lambda$.

The multiple zeta functions have an integral expression, as shown in the following proposition:
\begin{Proposition}{\rm(\cite[Proposition 2 (i)]{AKbook}, \cite[Theorem3 (i)]{AKbook})}\label{AKzg}
Let $k_1, \cdots, k_{r-1}\geq 1$ be integers.
For ${\rm Re}(s) > 1$, 
\begin{eqnarray}
 \zeta(k_1, k_2, \cdots k_{r-1} , s ) &=& \frac{1}{\Gamma(s)} \int_0^{\infty} \frac{t^{s-1}}{e^t-1} Li_{k_1, k_2, \cdots k_{r-1}} (e^{-t}) dt \\
&=&  \frac{1}{ \Gamma(k_{1}) \cdots \Gamma(k_{r-1})  \Gamma(k_{s})} \int_0^{\infty} \cdots \int_0^{\infty} x_{1}^{{k_{1}}-1} \cdots x_{r-1}^{{k_{r-1}}-1} x_{r}^{s-1} \notag \\
&& \frac{1}{ (e^{(x_{1}+ \cdots +x_{r})} - 1)(e^{(x_{2}+ \cdots +x_{r})} - 1) \cdots (e^{x_{r}} - 1)} dx_{1}  \cdots dx_{r-1} dx_{r} .  \notag
\end{eqnarray}
\end{Proposition}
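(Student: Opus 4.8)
The plan is to derive both displayed identities directly from the series definitions of $\zeta$ and $Li_{k_1,\cdots,k_{r-1}}$, using only two elementary ingredients: the Gamma integral $\int_0^\infty t^{a-1}e^{-mt}\,dt=\Gamma(a)/m^{a}$ (valid for $\mathrm{Re}(a)>0$, $m>0$) and the geometric expansion $\frac{1}{e^{u}-1}=\sum_{p\ge 1}e^{-pu}$ (valid for $u>0$). In each identity the only genuinely nontrivial point is the interchange of summation and integration, and I would justify it by absolute convergence; this is precisely where the hypothesis $\mathrm{Re}(s)>1$ enters. Throughout I write $k_r=s$, so that the factor $\Gamma(k_{s})$ appearing in the statement is understood as $\Gamma(s)$.

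For the first equality, I substitute $z=e^{-t}$ into the definition \eqref{polylog} of $Li_{k_1,\cdots,k_{r-1}}$ and expand $\frac{1}{e^{t}-1}=\sum_{j\ge1}e^{-jt}$, so that the integrand becomes
\[
\frac{t^{s-1}}{e^{t}-1}\,Li_{k_1,\cdots,k_{r-1}}(e^{-t})
=t^{s-1}\sum_{j\ge1}\ \sum_{0<m_1<\cdots<m_{r-1}}\frac{e^{-(j+m_{r-1})t}}{m_1^{k_1}\cdots m_{r-1}^{k_{r-1}}}.
\]
Setting $m_r=j+m_{r-1}$ merges the two sums into a single sum over $0<m_1<\cdots<m_{r-1}<m_r$, since the condition $j\ge1$ is exactly $m_r>m_{r-1}$. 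Integrating term by term and using $\frac{1}{\Gamma(s)}\int_0^\infty t^{s-1}e^{-m_r t}\,dt=m_r^{-s}$ then produces $\sum_{0<m_1<\cdots<m_r}\prod_j m_j^{-k_j}=\zeta(k_1,\cdots,k_{r-1},s)$.

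For the second equality I would start from the right-hand integral and run the same mechanism in reverse. Abbreviate $u_i=x_i+x_{i+1}+\cdots+x_r$, so that the $i$-th denominator factor is $e^{u_i}-1$. Expanding each $\frac{1}{e^{u_i}-1}=\sum_{p_i\ge1}e^{-p_i u_i}$ and multiplying out gives $\prod_{i=1}^{r}(e^{u_i}-1)^{-1}=\sum_{p_1,\cdots,p_r\ge1}e^{-\sum_i p_i u_i}$. The crucial reindexing is $m_j=p_1+\cdots+p_j$: as the $p_i$ run over positive integers, the $m_j$ run over all tuples with $0<m_1<\cdots<m_r$, and collecting the coefficient of each $x_j$ gives $\sum_i p_i u_i=\sum_j m_j x_j$, so that the series equals $\sum_{0<m_1<\cdots<m_r}e^{-\sum_j m_j x_j}$. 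Substituting this into the integral, interchanging sum and integral, and evaluating each factor by $\int_0^\infty x_j^{k_j-1}e^{-m_j x_j}\,dx_j=\Gamma(k_j)/m_j^{k_j}$ cancels the prefactor $\prod_j\Gamma(k_j)^{-1}$ and leaves $\sum_{0<m_1<\cdots<m_r}\prod_j m_j^{-k_j}=\zeta(k_1,\cdots,k_{r-1},s)$, as desired.

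The main obstacle is purely analytic: justifying the two interchanges of sum and integral. For this I would pass to absolute values, replace $s$ by $\sigma=\mathrm{Re}(s)$, and apply Tonelli's theorem to the resulting nonnegative integrands; the finiteness of the majorant is exactly the absolute convergence of $\zeta(k_1,\cdots,k_{r-1},\sigma)$, which holds because $k_1,\cdots,k_{r-1}\ge1$ and $\sigma>1$ (cf.\ Lemma~\ref{abscon}). Once this is secured, the algebraic reindexings above are routine, and both equalities follow.
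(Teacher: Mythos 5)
Your proof is correct, and your reading of the misprinted $\Gamma(k_{s})$ as $\Gamma(s)$ is the intended one. One caveat about the comparison: the paper gives no proof of Proposition \ref{AKzg} at all --- it is quoted from \cite{AKbook} --- so there is no internal argument to measure yours against. That said, your method is precisely the technique the paper does use when proving its own hook-type extension, Proposition \ref{znlG}: represent each $m^{-k}$ by a Gamma integral, expand each factor $(e^{u}-1)^{-1}$ as a geometric series, and justify the sum/integral interchange by absolute convergence. Your Tonelli argument, resting on the fact that $k_1,\cdots,k_{r-1}\geq 1$ and $\sigma={\rm Re}(s)>1$ make $\zeta(k_1,\cdots,k_{r-1},\sigma)$ finite, is the right justification and matches the convergence region recorded in Lemma \ref{abscon}. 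The only structural difference is in the multiple-integral identity: you expand all $r$ factors simultaneously and reindex by partial sums $m_j=p_1+\cdots+p_j$, collapsing the integrand into $\sum_{0<m_1<\cdots<m_r}e^{-(m_1x_1+\cdots+m_rx_r)}$ in one step, whereas the paper's proof of Proposition \ref{znlG} peels off one summation at a time, introducing successive differences such as $a=m_{1h}-m_{1,h-1}$ and $b=m_{\ell 1}-m_{\ell-1,1}$ and iterating the innermost sums. The two mechanisms are equivalent (your $p_i$ are exactly those differences), but each buys something: yours is more compact and symmetric, while the paper's iterative peeling adapts more directly to tableau-shaped sums, where weak (row) and strict (column) inequalities must be handled separately.
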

A similar calculation gives the following proposition.
\begin{Proposition}
Let $k_1, \cdots, k_{r-1}\geq 1$ be integers.
For ${\rm Re}(s) > 1$, 
\begin{eqnarray}
 \zeta^{\star}(k_1, k_2, \cdots k_{r-1} , s ) &=& \frac{1}{\Gamma(s)} \int_0^{\infty} \frac{e^tt^{s-1}}{e^t-1} Li_{k_1, k_2, \cdots k_{r-1}} (e^{-t}) dt. \\
&=&  \frac{1}{ \Gamma(k_{1}) \cdots \Gamma(k_{r-1})  \Gamma(k_{s})} \int_0^{\infty} \cdots \int_0^{\infty} x_{1}^{{k_{1}}-1} \cdots x_{r-1}^{{k_{r-1}}-1} x_r^{s-1} \notag \\
&& \frac{e^{x_1+2x_2+\cdots + r x_r}}{ (e^{(x_{1}+x_{2}+ \cdots  x_r)} - 1)(e^{(x_{2}+ \cdots x_r)} - 1) \cdots (e^{x_r} - 1)} dx_{1}  \cdots dx_{r-1} dx_r . \notag
\end{eqnarray}
\end{Proposition}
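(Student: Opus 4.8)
\noindent The plan is to imitate the proof of the preceding (Arakawa--Kaneko) proposition, the only structural change being the extra factor $e^{t}$ in the integrand. Where the non-star case uses $\frac{1}{e^{t}-1}=\sum_{j\geq 1}e^{-jt}$, here one expands $\frac{e^{t}}{e^{t}-1}=\frac{1}{1-e^{-t}}=\sum_{j\geq 0}e^{-jt}$, and allowing the term $j=0$ is precisely what weakens the strict inequality on the outermost summation index, turning the multiple zeta value into a multiple zeta-star value. Concretely, for the single-integral form I would insert this geometric series together with $Li_{k_{1},\cdots,k_{r-1}}(e^{-t})=\sum_{0<m_{1}<\cdots<m_{r-1}}e^{-m_{r-1}t}/(m_{1}^{k_{1}}\cdots m_{r-1}^{k_{r-1}})$ into the integral. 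For ${\rm Re}(s)>1$ and $k_{i}\geq 1$ the integrand is dominated near $t=0$ by $t^{{\rm Re}(s)-2}$ and near $t=\infty$ by $t^{{\rm Re}(s)-1}e^{-t}$, and the series has nonnegative terms of the form $t^{s-1}e^{-at}$, so Tonelli's theorem justifies interchanging summation and integration. Applying $\int_{0}^{\infty}t^{s-1}e^{-at}\,dt=\Gamma(s)a^{-s}$ with $a=j+m_{r-1}$ and setting $m_{r}=j+m_{r-1}$ (so $j\geq 0$ becomes $m_{r}\geq m_{r-1}$) collapses everything, after dividing by $\Gamma(s)$, to a multiple series whose top index is weakly ordered.

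For the iterated-integral form I would instead expand each denominator factor $\frac{1}{e^{y_{j}}-1}$, with $y_{j}=x_{j}+\cdots+x_{r}$, as a geometric series $\sum_{n_{j}\geq 1}e^{-n_{j}y_{j}}$, rewrite the total exponent as $\sum_{j}n_{j}y_{j}=\sum_{\ell}M_{\ell}x_{\ell}$ with $M_{\ell}=n_{1}+\cdots+n_{\ell}$, and integrate each $x_{\ell}$ against $x_{\ell}^{k_{\ell}-1}$ (resp. $x_{r}^{s-1}$) by the same Gamma integral. This reproduces $\sum 1/(M_{1}^{k_{1}}\cdots M_{r-1}^{k_{r-1}}M_{r}^{s})$ over the strictly increasing chain $M_{1}<\cdots<M_{r}$, and the exponential numerator is then the device that shifts this strict chain to the weakly increasing chain $m_{1}\leq\cdots\leq m_{r}$ demanded by $\zeta^{\star}$, by absorbing factor by factor a shift of the form $M_{\ell}\mapsto M_{\ell}-(\ell-1)$. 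Matching this bookkeeping against the previous proposition (whose numerator is $1$) is what fixes the precise exponential weight that should appear in the statement.

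\textbf{Main obstacle.} The genuinely delicate point is making \emph{all} of the inequalities weak, not just the last one. The factor $e^{t}$ (equivalently the term $j=0$) only relaxes the outermost index; to reach the full star region $1\leq m_{1}\leq\cdots\leq m_{r}$ one must also relax the inner inequalities, which forces the multipolylogarithm entering the integrand to be taken with weakly increasing indices, i.e.\ its star counterpart $Li^{\star}_{k_{1},\cdots,k_{r-1}}$, and forces the exponential weight in the iterated form to be tuned so that every summation index remains a positive integer with $m_{1}\geq 1$ (a weight that is even slightly too large would admit a vanishing index and destroy the convergence of the innermost Gamma integral). Thus the two things to watch are the absolute-convergence justification for the interchanges and the exact strict-to-weak bookkeeping; once these are pinned down, the remainder is the routine Gamma-integral computation already carried out for the non-star proposition.
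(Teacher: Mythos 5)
Your plan is the paper's plan: the paper offers no argument for this proposition at all, saying only that ``a similar calculation'' to the non-star case (Proposition~\ref{AKzg}, quoted from Arakawa--Kaneko) gives it, and that calculation is exactly the geometric-series-plus-Gamma-integral routine you describe. So methodologically you are aligned with the intended proof. However, the two points in your ``main obstacle'' paragraph are not loose ends in your write-up --- they are genuine defects of the statement as printed, and you have diagnosed both of them correctly. With the paper's definition \eqref{polylog} of $Li_{k_1,\cdots,k_{r-1}}$ (strict indices), expanding $e^{t}/(e^{t}-1)=\sum_{j\ge 0}e^{-jt}$ and integrating term by term gives, exactly as in your first paragraph,
\[
\frac{1}{\Gamma(s)}\int_0^{\infty}\frac{e^{t}t^{s-1}}{e^{t}-1}\,Li_{k_1,\cdots,k_{r-1}}(e^{-t})\,dt
=\sum_{0<m_1<\cdots<m_{r-1}\le m_r}\frac{1}{m_1^{k_1}\cdots m_{r-1}^{k_{r-1}}m_r^{s}},
\]
in which only the outermost inequality has become weak; for $r\ge 3$ this is strictly smaller than $\zeta^{\star}(k_1,\cdots,k_{r-1},s)$, which also contains terms with $m_1=m_2$, etc. As you say, the first equality holds only if the integrand carries the star polylogarithm $Li^{\star}_{k_1,\cdots,k_{r-1}}$ (weakly increasing indices).

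The second equality is worse, and your warning that a weight ``even slightly too large would admit a vanishing index and destroy the convergence'' is precisely what happens. Writing $y_j=x_j+\cdots+x_r$, the printed numerator is $e^{x_1+2x_2+\cdots+rx_r}=e^{y_1+\cdots+y_r}$, so every factor of the integrand is $e^{y_j}/(e^{y_j}-1)\ge 1$ and the integrand is bounded below by $x_1^{k_1-1}\cdots x_{r-1}^{k_{r-1}-1}x_r^{{\rm Re}(s)-1}$; already the $x_1$-integral diverges. Equivalently, in your bookkeeping the printed weight shifts the strict chain by $M_\ell\mapsto M_\ell-\ell$ rather than $M_\ell\mapsto M_\ell-(\ell-1)$, and the minimal strict chain $M_\ell=\ell$ then produces a vanishing index, i.e.\ a divergent Gamma integral. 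The correct weight is $e^{y_2+\cdots+y_r}=e^{x_2+2x_3+\cdots+(r-1)x_r}$: keep $n_1\ge 1$ in the outermost geometric series and relax only the inner ones to $n_j\ge 0$, which forces $1\le M_1\le M_2\le\cdots\le M_r$ and lands exactly on $\zeta^{\star}$. In short, no proof of the literal statement exists; but carried out with these two corrections ($Li\to Li^{\star}$ in the one-variable integral, numerator lowered by one unit per variable in the iterated one), your outline, together with the Tonelli justification you indicate, is a complete and correct proof of the repaired proposition.
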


Schur multiple zeta functions for $\lambda = (h, 1^{\ell-1})$ also have an integral expression.
\begin{Proposition} \label{znlG}
For $\lambda = (h,1^{\ell-1})$, 
\begin{equation}\label{zetagamma}
\zeta_\lambda (\mathbf{k}) = \sum_{{\bf j} \preceq {\bf k} } \zeta ({\bf j}) =  \sum_{{\bf j} \preceq {\bf k} } \frac{1}{\Gamma(s)} \int_0^{\infty} \frac{t^{s-1}}{e^t-1} Li_{{\bf j^{-}}} (e^{-t}) dt,
\end{equation}
where $ {\bf j^{-}} = (j_1, j_2, \cdots j_{h-1} ) $ for $\mathbf{j} = (j_1, j_2, \cdots j_{h-1} , s ) $, and further
\begin{align}
 \zeta_\lambda({\bf k})
= & \frac{1}{ \Gamma(k_{11}) \cdots \Gamma(k_{1h})  \Gamma(k_{21}) \cdots \Gamma(k_{\ell 1})} \int_0^{\infty} \cdots \int_0^{\infty} x_{11}^{{k_{11}}-1} \cdots x_{\ell 1}^{{k_{\ell 1}}-1} x_{12}^{k_{12}-1} \cdots x_{1h}^{k_{1h}}  \notag\\ 
  & \frac{1}{e^{x_{11} + \cdots + x_{\ell 1} + x_{12} \cdots + x_{1h}} -1}  \frac{1}{e^{( x_{21} + \cdots + x_{\ell 1})}-1}  \cdots \frac{1}{e^{ x_{\ell 1}}-1}  \frac{e^{(x_{12}+ \cdots + x_{1h})}}{ e^{(x_{12}+ \cdots + x_{1h})} - 1}\cdots \frac{e^{x_{1h}}}{e^{x_{1h}} - 1} \notag\\
  & dx_{11} \cdots dx_{\ell 1} dx_{12} \cdots dx_{1h}.\label{integralexpression}
\end{align}
\end{Proposition}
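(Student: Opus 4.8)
The plan is to prove the three displayed expressions in turn. The first equality is simply the decomposition \eqref{zetalambdas} of Nakasuji--Phuksuwan--Yamasaki specialized to the hook $\lambda=(h,1^{\ell-1})$, so nothing new is needed there. For the second equality I would apply Proposition \ref{AKzg} to each multiple zeta function $\zeta(\mathbf{j})$ occurring in the sum: writing $\mathbf{j}=(\mathbf{j}^-,s)$ with $s$ its final component (legitimate because $\mathbf{k}\in W_\lambda$ forces the corner exponents, hence $s$, into the strict convergence region), Proposition \ref{AKzg} gives $\zeta(\mathbf{j})=\frac{1}{\Gamma(s)}\int_0^\infty \frac{t^{s-1}}{e^t-1}Li_{\mathbf{j}^-}(e^{-t})\,dt$, and summing over $\mathbf{j}\preceq\mathbf{k}$ produces the middle expression. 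The real content is the explicit multivariable integral \eqref{integralexpression}, which I would establish directly rather than by unwinding the middle form.

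For \eqref{integralexpression} I would start from the definition
\[
\zeta_\lambda(\mathbf{k})=\sum_{M\in SSYT(\lambda)}\prod_{(i,j)\in D(\lambda)}m_{ij}^{-k_{ij}},
\]
and insert the Gamma-integral representation $m^{-k}=\frac{1}{\Gamma(k)}\int_0^\infty x^{k-1}e^{-mx}\,dx$ for each of the $h+\ell-1$ boxes, with integration variable $x_{ij}$. Absolute convergence on the relevant domain (Lemma \ref{abscon}) lets me interchange the sum over $SSYT(\lambda)$ with the integrals, reducing the problem to evaluating
\[
\sum_{M\in SSYT(\lambda)}\exp\Big(-\sum_{(i,j)\in D(\lambda)}m_{ij}\,x_{ij}\Big)
\]
in closed form.

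The key step is to parametrize $SSYT(\lambda)$ for the hook by independent difference variables. Since the hook consists of one strictly increasing column $m_{11}<m_{21}<\cdots<m_{\ell 1}$ and one weakly increasing row $m_{11}\le m_{12}\le\cdots\le m_{1h}$ joined at the corner $m_{11}$, I set $c_1=m_{11}$, $c_i=m_{i1}-m_{(i-1)1}$ for $2\le i\le\ell$ (all $\ge 1$), and $e_j=m_{1j}-m_{1(j-1)}\ge 0$ for $2\le j\le h$; this is a bijection onto $\prod_{i=1}^{\ell}\{c_i\ge 1\}\times\prod_{j=2}^{h}\{e_j\ge 0\}$. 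Substituting $m_{i1}=c_1+\cdots+c_i$ and $m_{1j}=c_1+e_2+\cdots+e_j$ and collecting coefficients, the shared variable $c_1$ multiplies the sum of all the $x_{ij}$, each column difference $c_p$ multiplies $x_{p1}+\cdots+x_{\ell 1}$, and each row difference $e_q$ multiplies $x_{1q}+\cdots+x_{1h}$. The exponential sum therefore factors completely, and summing the geometric series yields $\frac{1}{e^{S}-1}$ for the corner (with $S$ the total sum of variables), a factor $\frac{1}{e^{x_{p1}+\cdots+x_{\ell 1}}-1}$ for each column step (since $c_p\ge 1$), and a factor $\frac{e^{x_{1q}+\cdots+x_{1h}}}{e^{x_{1q}+\cdots+x_{1h}}-1}$ for each row step (since $e_q\ge 0$). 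Reassembling with the $\prod\Gamma(k_{ij})^{-1}$ prefactor reproduces \eqref{integralexpression} exactly.

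The main obstacle is the combinatorial bookkeeping in this last step: one must track the shared corner, whose difference variable is the only one appearing in every box and hence yields the single factor involving all variables, and keep the strict column steps (producing the $\frac{1}{e^{\cdots}-1}$ factors, from $c_p\ge 1$) cleanly separated from the weak row steps (producing the $\frac{e^{\cdots}}{e^{\cdots}-1}$ factors, from $e_q\ge 0$). A secondary technical point is justifying the interchange of summation with the $(h+\ell-1)$-fold integration; this is routine since all integrands are positive for real positive $x_{ij}$ and the terms are nonnegative, so Tonelli's theorem applies and absolute convergence is controlled by Lemma \ref{abscon}.
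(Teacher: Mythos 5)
Your proposal is correct and is essentially the paper's own proof: the paper likewise obtains \eqref{zetagamma} from \eqref{zetalambdas} together with Proposition \ref{AKzg}, and proves \eqref{integralexpression} by inserting the Gamma-integral representation $m^{-k}=\frac{1}{\Gamma(k)}\int_0^\infty x^{k-1}e^{-mx}\,dx$ box by box and summing geometric series over exactly your difference variables (weak row steps $\geq 0$ giving the factors $\frac{e^{x_{1q}+\cdots+x_{1h}}}{e^{x_{1q}+\cdots+x_{1h}}-1}$, strict column steps $\geq 1$ giving $\frac{1}{e^{x_{p1}+\cdots+x_{\ell 1}}-1}$, and the shared corner giving the factor involving all variables). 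The only difference is organizational: the paper peels off one outermost box at a time and iterates inward, whereas you insert all Gamma integrals simultaneously, justify the interchange by Tonelli, and perform the global difference-variable bijection in one step.
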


 \begin{proof}
 The first assertion is obtained from $(\ref{zetalambdas} )$ and Proposition $\ref{AKzg}$.
For the second assertion, we can write
\begin{eqnarray*}
 \zeta_\lambda({\bf k})
  &= & \sum_{\substack{0 < m_{11} \leq \cdots \leq m_{1h}, \\ 0 < m_{11} < \cdots < m_{\ell1} }}  \frac{1}{m_{11}^{k_{11}} \cdots m_{1h}^{k_{1h}} m_{21}^{k_{21}} \cdots m_{\ell 1}^{k_{\ell 1}}} \\
 &= &\sum_{m_{11} = 1}^\infty  \frac{1}{m_{11}^{k_{11}} } \left (  \sum_{m_{11} \leq m_{12} \leq \cdots  \leq m_{1h}} \frac{1}{m_{12}^{k_{12}} \cdots  m_{1h}^{k_{1h}}}  \right) \left (  \sum_{m_{11} < m_{21} < \cdots < m_{\ell1}} \frac{1}{m_{21}^{k_{21}} \cdots  m_{\ell 1}^{k_{ \ell 1}}} \right ) \\
& = & \sum_{m_{11} = 1}^\infty  \frac{1}{m_{11}^{k_{11}} } \left (  \sum_{m_{11} \leq m_{12} \leq \cdots  \leq m_{1,h-1}} \frac{1}{m_{12}^{k_{12}} \cdots  m_{1,h-1}^{k_{1,h-1}}} \sum_{m_{1h} = m_{1,h-1}}^{ \infty} \frac{1}{m_{1h}^{k_{1h}}}   \right)  \\
&& \left (  \sum_{m_{11} < m_{21} < \cdots < m_{\ell-1,1}} \frac{1}{m_{21}^{k_{21}} \cdots  m_{\ell-1,1}^{k_{ \ell-1,1}}} \sum_{m_{\ell1} = m_{\ell-1,1} +1 }^{\infty} \frac{1}{m_{\ell 1}^{k_{\ell 1}}} \right ) .
\end{eqnarray*}

Using Proposition $\ref{AKzg}$, this equals
 \allowdisplaybreaks[1]
\begin{eqnarray*}
\hspace{5mm}& =&   \sum_{m_{11} = 1}^\infty  \frac{1}{m_{11}^{k_{11}} } \cdot\\
& &\left (  \sum_{m_{11} \leq m_{12} \leq \cdots  \leq m_{1,h-1}} \frac{1}{m_{12}^{k_{12}} \cdots  m_{1,h-1}^{k_{1,h-1}}} \sum_{m_{1h} = m_{1,h-1}}^{\infty} \frac{1}{\Gamma(k_{1h})} \int_0^\infty e^{-m_{1h} x_{1h}} x_{1h}^{k_{1h}-1} dx_{1h} \right)  \\
& &\left (  \sum_{m_{11} < m_{21} < \cdots < m_{\ell-1,1}} \frac{1}{m_{21}^{k_{21}} \cdots  m_{\ell-1,1}^{k_{ \ell-1,1}}} \sum_{m_{\ell1} = m_{\ell-1,1} +1 }^{\infty}  \frac{1}{\Gamma(k_{\ell1})} \int_0^\infty e^{-m_{\ell1} x_{\ell1}} x_{\ell1}^{k_{\ell1}-1} dx_{\ell1} \right ). 
\end{eqnarray*}
Setting $a = m_{1h} - m_{1,h-1} \text{ and } b = m_{\ell1} - m_{\ell-1,1}$, 
$$\sum_{m_{1h} = m_{1,h-1}}^{\infty}e^{-m_{1h} x_{1h}}=\sum_{a = 0}^\infty e^{-a x_{1h} }e^{-m_{1,h - 1} x_{1h}}=\frac{x_{1h}^{k_{1h}-1} e^{x_{1h}}}{e^{x_{1h}}-1}e^{-m_{1,h - 1} x_{1h}},$$
$$\sum_{m_{\ell1} = m_{\ell-1,1} +1 }^{\infty}e^{-m_{\ell1} x_{\ell1}}=\sum_{b =1}^\infty e^{-b x_{\ell1}}e^{-m_{\ell-1, 1}x_{\ell1}}=\frac{x_{\ell 1}^{k_{\ell1}-1}}{e^{ x_{\ell1}} - 1}e^{-m_{\ell-1, 1}x_{\ell1}},
$$
so
\begin{eqnarray*}
 \zeta_\lambda({\bf k})& =&   \sum_{m_{11} = 1}^\infty  \frac{1}{m_{11}^{k_{11}} } \left ( \frac{1}{\Gamma(k_{1h})}  \int_0^\infty  \frac{x_{1h}^{k_{1h}-1} e^{x_{1h}}}{e^{x_{1h}}-1}  \sum_{m_{11} \leq m_{12} \leq \cdots  \leq m_{1,h-1}} \frac{e^{-m_{1,h - 1} x_{1h}}}{m_{12}^{k_{12}} \cdots  m_{1,h-1}^{k_{1,h-1}}}  dx_{1h} \right)  \\
&& \left ( \frac{1}{\Gamma(k_{\ell1})}  \int_0^\infty    \frac{x_{\ell 1}^{k_{\ell1}-1}}{e^{ x_{\ell1}} - 1}   \sum_{m_{11} < m_{21} < \cdots < m_{\ell-1,1}} \frac{e^{-m_{\ell-1, 1}x_{\ell1}}}{m_{21}^{k_{21}} \cdots  m_{\ell-1,1}^{k_{ \ell-1,1}}} dx_{\ell1} \right ) .
\end{eqnarray*}
Applying the same argument repeatedly, we obtain
\begin{eqnarray*}
  &=&  \frac{1}{ \Gamma(k_{12}) \cdots \Gamma(k_{1h})  \Gamma(k_{21}) \cdots \Gamma(k_{\ell 1})} \int_0^{\infty} \frac{x_{1h}^{k_{1h}-1} e^{x_{1h}}}{e^{x_{1h}}-1} \int_0^\infty \cdots \int_0^\infty  \frac{x_{12}^{k_{12}-1}e^{(x_{12 }+ \cdots + x_{1h})}}{e^{(x_{12 }+ \cdots + x_{1h})}-1} \\
   && \int_0^{\infty}  \frac{x_{\ell 1}^{k_{\ell 1}-1}}{e^{x_{ \ell 1}}-1} \int_0^\infty  \cdots \int_0^\infty \frac{x_{21}^{k_{21}-1}}{e^{(x_{21 }+ \cdots + x_{\ell 1})}-1}  \\
   &&
 \int_0^\infty \sum_{m_{11}=1}^\infty \frac{e^{-m_{11}(x_{12} + \cdots x_{1h} + x_{21} + \cdots x_{\ell 1})}}{m_{11} ^{k_{11}}}   dx_{21} \cdots dx_{\ell1} dx_{12} \cdots dx_{1h}.
\end{eqnarray*}
From the definition of the $\Gamma$-function, we have
\begin{eqnarray*}
  &= & \frac{1}{ \Gamma(k_{12}) \cdots \Gamma(k_{1h})  \Gamma(k_{21}) \cdots \Gamma(k_{\ell1})} \int_0^{\infty} \frac{x_{1h}^{k_{1h}-1} e^{x_{1h}}}{e^{x_{1h}}-1} \int_0^\infty \cdots \int_0^\infty  \frac{x_{12}^{k_{12}-1}e^{(x_{12 }+ \cdots + x_{1h})}}{e^{(x_{12 }+ \cdots + x_{1h})}-1} \\
   && \int_0^{\infty}  \frac{x_{\ell 1}^{k_{\ell 1}-1}}{e^{x_{ \ell 1}}-1} \int_0^\infty  \cdots \int_0^\infty \frac{x_{21}^{k_{21}-1}}{e^{(x_{21 }+ \cdots + x_{\ell1})}-1}  \\
  & & \int_0^\infty \frac{1}{\Gamma(k_{11})}   \int_0^\infty \frac{x_{11}^{k_{11}-1}}{e^{x_{11} + \cdots +x_{1h} +x_{21} + \cdots x_{\ell1} }-1}   dx_{11} dx_{21} \cdots dx_{\ell1} dx_{12} \cdots dx_{1h}.
  \end{eqnarray*}
  This implies  \eqref{integralexpression}.
\end{proof}

\section{Bernoulli numbers}
As in Introduction, Schur type Bernoulli numbers are defined by $\eqref{defBernoulli}$ and  $\eqref{defBernoulliC}$ using $Li_{\mathbf{k}}^\lambda (\mathbf{z})$.
These numbers ${\mathbb B}_{{m_1}, \cdots ,{m_c}}^{\lambda, \mathbf{k}}$ and ${\mathbb C}_{{m_1}, \cdots ,{m_c}}^{\lambda, \mathbf{k}}$ are 
related to each other as in the following theorem.
\begin{Theorem}
 The following relations hold:
 \begin{eqnarray}
{\mathbb B}_{{m_1}, \cdots ,{m_c}}^{\lambda, \mathbf{k}}&=&
\sum_{ n_1 = 0}^{{m_1}} \cdots \sum_{ n_c = 0}^{{m_c}} \left(  \begin{matrix}
m_1 \\ n_1
\end{matrix} \right) 
\cdots
\left(  \begin{matrix}
m_c \\ n_c
\end{matrix} \right) 
{\mathbb C}_{{n_1}, \cdots {n_c}}^{\lambda, \mathbf{k}}, \label{bnmkl} \\
{\mathbb C}_{{m_1}, \cdots, {m_c}}^{\lambda, \mathbf{k}}&=&\sum_{ n_1 = 0}^{{m_1}} \cdots \sum_{ n_c = 0}^{{m_c}} 
(-1)^{\sum_i m_i - \sum_j n_j }
\left(  \begin{matrix}
m_1 \\ n_1
\end{matrix} \right) 
\cdots
\left(  \begin{matrix}
m_c \\ n_c
\end{matrix} \right) 
{\mathbb B}_{{n_1}, \cdots {n_c}}^{\lambda, \mathbf{k}}. \label{cnmkl}
 \end{eqnarray}
\end{Theorem}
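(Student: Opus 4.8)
The plan is to exploit the elementary identity relating the two denominators. Since $e^{z_i}-1 = e^{z_i}(1-e^{-z_i})$, we have $\tfrac{1}{1-e^{-z_i}} = \tfrac{e^{z_i}}{e^{z_i}-1}$, so dividing the common numerator $Li_{\mathbf{k}}^{\lambda}(1-e^{-z_1},\ldots,1-e^{-z_c})$ by the two denominators in \eqref{defBernoulli} and \eqref{defBernoulliC} shows that the generating function for the ${\mathbb B}$'s is simply $e^{z_1+\cdots+z_c}$ times that for the ${\mathbb C}$'s. Concretely, I would first record the identity
\begin{equation*}
\sum_{m_1,\ldots,m_c=0}^{\infty}{\mathbb B}_{m_1,\ldots,m_c}^{\lambda,\mathbf{k}}\frac{z_1^{m_1}\cdots z_c^{m_c}}{m_1!\cdots m_c!}
= \Bigl(\prod_{i=1}^{c}e^{z_i}\Bigr)\sum_{n_1,\ldots,n_c=0}^{\infty}{\mathbb C}_{n_1,\ldots,n_c}^{\lambda,\mathbf{k}}\frac{z_1^{n_1}\cdots z_c^{n_c}}{n_1!\cdots n_c!}.
\end{equation*}

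Next I would expand each exponential as $e^{z_i}=\sum_{p_i\geq 0}z_i^{p_i}/p_i!$ and carry out the multivariate Cauchy product. Because the exponential generating coefficients of $e^{z_i}$ are all equal to $1$, the binomial convolution in the $i$-th variable contributes exactly the factor $\binom{m_i}{n_i}$. Reading off the coefficient of $\frac{z_1^{m_1}\cdots z_c^{m_c}}{m_1!\cdots m_c!}$ on both sides then yields \eqref{bnmkl}. For \eqref{cnmkl} I would run the same computation with the roles reversed, using instead $\tfrac{1}{e^{z_i}-1}=\tfrac{e^{-z_i}}{1-e^{-z_i}}$, so that the ${\mathbb C}$ generating function equals $e^{-z_1-\cdots-z_c}$ times the ${\mathbb B}$ one; since the generating coefficients of $e^{-z_i}$ are $(-1)^{p_i}$, each variable now contributes $(-1)^{m_i-n_i}\binom{m_i}{n_i}$, and the product of signs collapses to $(-1)^{\sum_i m_i-\sum_j n_j}$, giving \eqref{cnmkl}.

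There is no substantial obstacle here: the argument is a routine application of the exponential-generating-function formalism, and the only point deserving care is that the numerator $Li_{\mathbf{k}}^{\lambda}$ is left untouched throughout, so that the two denominators differ only by the separable factor $e^{\pm(z_1+\cdots+z_c)}$, which splits cleanly across the $c$ variables and allows the binomial convolution to be applied one variable at a time. Alternatively, once \eqref{bnmkl} is established, \eqref{cnmkl} follows formally by binomial inversion applied independently in each of the $c$ indices, which may be the cleanest way to present the second relation.
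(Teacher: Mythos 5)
Your proposal is correct and follows essentially the same route as the paper: both start from the identity $\frac{1}{1-e^{-z_i}}=\frac{e^{z_i}}{e^{z_i}-1}$, so that the ${\mathbb B}$ generating series equals $e^{z_1+\cdots+z_c}$ times the ${\mathbb C}$ series, and then extract \eqref{bnmkl} by the multivariate Cauchy product and coefficient comparison, with \eqref{cnmkl} obtained by the symmetric computation carrying the signs $(-1)^{m_i-n_i}$. Your closing remark that \eqref{cnmkl} also follows from \eqref{bnmkl} by coordinatewise binomial inversion is a valid alternative to the paper's ``similarly'' for the second relation, but it does not change the substance of the argument.
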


\begin{proof}
Since
$$
\frac{{Li}_k^{\lambda}(1-e^{-z_1},  \cdots, 1-e^{-z_c})}{(1-e^{-z_1}) \cdots (1-e^{-z_c})}
=
e^{z_1} \cdots e^{z_c}\frac{{Li}_k^{\lambda}(1-e^{-z_1},  \cdots ,1-e^{-z_c})}{(e^{z_1}-1)  \cdots
(e^{z_c}-1)},
$$
\eqref{defBernoulli} and \eqref{defBernoulliC} give
$$
\sum_{{m_1}, \cdots ,{m_c}=0}^{\infty}{\mathbb B}_{{m_1}, \cdots, {m_c}}^{\lambda, \mathbf{k}}\frac{z_1^{{m_1}} \cdots z_c^{{m_c}}}{{m_1}!  \cdots {m_c}!}
=\sum_{s_1=0}^{\infty}\frac{z_1^{s_1}}{{s_1}!} \cdots \sum_{s_c=0}^{\infty}\frac{z_c^{s_c}}{{s_c}!} 
\sum_{{m_1}, \cdots ,{m_c}=0}^{\infty}{\mathbb C}_{{m_1}, \cdots ,{m_c}}^{\lambda, \mathbf{k}}\frac{z_1^{{m_1}} \cdots z_c^{{m_c}}}{{m_1}!  \cdots {m_c}!}.
$$
By changing the order of the last two summations, the right-hand side becomes 
$$
\sum_{s_1=0}^{\infty}\frac{z_1^{s_1}}{{s_1}!} \cdots \sum_{s_1=0}^{\infty}\frac{z_{m_{c-1}}^{s_{c-1}}}{{s_{c-1}}!}  
 \sum_{{m_1}, \cdots ,{m_c}=0}^{\infty} 
\sum_{n_c=0}^{{m_c}}{\mathbb C}_{{m_1}, \cdots ,{n_c}}^{\lambda, \mathbf{k}}\frac{z_1^{{m_1}} \cdots z_c^{{m_c}}}{{m_1}!  \cdots {m_c}!({m_c}-{n_c})!}.
$$
Changing the order of summation in the right-hand side repeatedly gives
$$
\sum_{{m_1}, \cdots , {m_c}=0}^{\infty}\sum_{ n_1 = 0}^{{m_1}} \cdots \sum_{ n_c = 0}^{{m_c}} \left(  \begin{matrix}
m_1 \\ n_1
\end{matrix} \right) 
\cdots
\left(  \begin{matrix}
m_c \\ n_c
\end{matrix} \right) 
{\mathbb C}_{{n_1}, \cdots , {n_c}}^{\lambda, \mathbf{k}}\frac{z_1^{{m_1}} \cdots z_c^{{m_c}}}{{m_1}!  \cdots {m_c}!}.
$$
By comparing the coefficients of the two sides, we have $(\ref{bnmkl})$. Equation $(\ref{cnmkl})$ can also be obtained similarly.
\end{proof}

Next we investigate a certain multiple zeta function which interpolates Schur type poly-Bernoulli numbers. This function, whose definition follows, can be regarded as a generalization of \eqref{AKMZ}.
\begin{Definition} \label{xidef}
Let
\begin{multline*}
\xi( \mathbf{k} ; s_1, \cdots ,s_c) =  \frac{1}{\Gamma(s_1) \cdots \Gamma(s_c)} \\
\int_{0}^\infty \cdots \int_{0}^\infty z_1^{s_1-1}  \cdots z_c^{s_c-1} \frac{Li_{\mathbf{k}}^\lambda(1-e^{-z_1}, \cdots , 1-e^{-z_c})}{(e^{z_1}-1) \cdots (e^{z_c}-1)} dz_1 \cdots dz_c
\end{multline*}
for $s_1, \cdots ,s_c \in \mathbb{C}$ with ${\rm Re}(s_i) > 0$.
\end{Definition}
The integral converge absolutely in ${\rm Re}(s_i) > 0$ because
$$
Li_{\mathbf{k}}^\lambda(1-e^{-z_1}, \cdots , 1-e^{-z_c}) \leq Li^\lambda_{\mathbf{k}}(1, \cdots , 1) = \zeta_\lambda(\mathbf{k}) .
$$
The special values  of $\xi^{\lambda}( \mathbf{k} ; s_1, \cdots ,s_c) $ at non-positive integers are related to $\mathbb{C}_{m_1, \cdots , m_c}^{\lambda, \mathbf{k}} $ as given in the following theorem.
\begin{Theorem} \label{xi}
The function $ \xi^{\lambda}( \mathbf{k} ; s_1,  \cdots , s_c) $ can be analytically continued to an entire function on the whole ${\mathbb C}^c$ plane. Furthermore, $\xi^{\lambda}( \mathbf{k} ; s_1, \cdots , s_c)$ at non-positive integers are given by
\begin{equation}
 \xi^{\lambda}( \mathbf{k} ; -m_1,  \cdots , -m_c) =  (-1)^{\sum_{j=1}^c m_j}\mathbb{C}_{m_1, \cdots , m_c}^{\lambda, \mathbf{k}} 
 \end{equation}
 for $m_i\geq 1$ being integers $(i=1, 2, \cdots, c)$.
 \end{Theorem}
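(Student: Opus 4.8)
The plan is to follow the standard Arakawa--Kaneko strategy, adapting it to the multivariable Schur setting. The single-variable template is clear: one expands the integrand into a Mellin-type integral, uses the classical fact that $\frac{1}{\Gamma(s)}\int_0^\infty z^{s-1} g(z)\,dz$ analytically continues via the asymptotic/Taylor expansion of $g$ at $z=0$, and reads off the special values at nonpositive integers from the Taylor coefficients of $g$. Here the role of $g$ is played by the $c$-fold product
\begin{equation*}
g(z_1,\cdots,z_c) = \frac{Li_{\mathbf{k}}^\lambda(1-e^{-z_1},\cdots,1-e^{-z_c})}{(e^{z_1}-1)\cdots(e^{z_c}-1)},
\end{equation*}
which by definition \eqref{defBernoulliC} has the Taylor expansion $\sum_{n_1,\cdots,n_c\geq 0} \mathbb{C}_{n_1,\cdots,n_c}^{\lambda,\mathbf{k}} \frac{z_1^{n_1}\cdots z_c^{n_c}}{n_1!\cdots n_c!}$. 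So the Schur type poly-Bernoulli numbers of type $\mathbb{C}$ are, by construction, exactly the normalized Taylor coefficients of the integrand, which is precisely why they should appear in the special values.

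First I would establish the analytic continuation. The natural device is the classical contour (Hankel-type) representation: replace each $\frac{1}{\Gamma(s_i)}\int_0^\infty z_i^{s_i-1}(\cdots)\,dz_i$ by a contour integral $\frac{\Gamma(1-s_i)}{2\pi i}\int_{\mathcal{C}} z_i^{s_i-1}(\cdots)\,dz_i$ over a path coming from $+\infty$, around the origin, and back to $+\infty$, using the reflection-type identity for $\Gamma$. Because $g$ is holomorphic in a neighborhood of each $z_i=0$ (the apparent pole of $\frac{1}{e^{z_i}-1}$ at the origin is cancelled by the vanishing of $Li_{\mathbf{k}}^\lambda$ there, as already reflected in the fact that the $\mathbb{C}$-expansion starts at $n_i=0$ with finite coefficients) and decays like the absolutely convergent bound $\zeta_\lambda(\mathbf{k})$ times $e^{-(z_1+\cdots+z_c)}$ at infinity, the contour integral converges for all $(s_1,\cdots,s_c)\in\mathbb{C}^c$ and defines an entire function in each variable. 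I would carry out the contour deformation one variable at a time, so that the iterated structure of the integral poses no new difficulty beyond the single-variable case treated in \cite{AKbook}.

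Next I would extract the special values. Shrinking the small circle around the origin picks out, by the residue/Taylor argument, the coefficient of $z_i^{-s_i-1}$ in the Laurent (here Taylor) expansion of $g$; setting $s_i=-m_i$ with $m_i\geq 1$ isolates the coefficient of $z_i^{m_i-1}\cdot z_i$, i.e. of $z_i^{m_i}$, together with the factor $\Gamma(1-s_i)=\Gamma(1+m_i)=m_i!$ and the sign $(-1)^{m_i}$ produced by the $z_i^{s_i-1}$ on the incoming versus outgoing rays. Multiplying the $c$ one-dimensional evaluations gives the factor $\prod_i (-1)^{m_i} m_i! \cdot \frac{1}{m_i!}\mathbb{C}^{\lambda,\mathbf{k}}_{m_1,\cdots,m_c}$, where the $\frac{1}{m_i!}$ comes from matching the Taylor coefficient to the normalized coefficient in \eqref{defBernoulliC}. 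The product of signs collapses to $(-1)^{\sum_j m_j}$, yielding the claimed identity $\xi^\lambda(\mathbf{k};-m_1,\cdots,-m_c)=(-1)^{\sum_j m_j}\mathbb{C}^{\lambda,\mathbf{k}}_{m_1,\cdots,m_c}$.

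I expect the main obstacle to be rigorously justifying the multivariable contour deformation and the interchange of the $c$ limits/integrals, rather than any algebraic subtlety. Concretely, one must verify uniform convergence of the iterated contour integrals and that $g$ genuinely extends holomorphically across each hyperplane $z_i=0$ jointly in all variables; the bound $Li_{\mathbf{k}}^\lambda(1-e^{-z_1},\cdots)\leq\zeta_\lambda(\mathbf{k})$ controls the behavior at infinity, but near the origin one needs the cancellation of the zero of $Li_{\mathbf{k}}^\lambda$ against the simple poles of the $(e^{z_i}-1)^{-1}$ factors in each variable separately, which is exactly the content of the finiteness of the coefficients $\mathbb{C}^{\lambda,\mathbf{k}}_{n_1,\cdots,n_c}$. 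Once that joint holomorphy is in hand, the reduction to $c$ independent single-variable arguments makes the rest routine.
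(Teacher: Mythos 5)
Your proposal is correct and takes essentially the same route as the paper: the paper likewise borrows the Kaneko--Tsumura contour argument, replacing each Mellin integral by a Hankel-type contour integral $H^{\lambda}(\mathbf{k};s_1,\cdots,s_c)$ (path from infinity, small circle $C_\epsilon$ around the origin, path back to infinity), observing that this is entire, and extracting $\mathbb{C}^{\lambda,\mathbf{k}}_{m_1,\cdots,m_c}$ at $s_i=-m_i$ from the small-circle residue computation on the Taylor expansion \eqref{defBernoulliC}. The only cosmetic difference is that you package the prefactor as $\prod_i\Gamma(1-s_i)/(2\pi i)$ via the reflection formula, while the paper keeps it as $\prod_j\bigl[(e^{2\pi i s_j}-1)\Gamma(s_j)\bigr]^{-1}$ and evaluates its limit at $s_j=-m_j$; these agree identically.
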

We can prove Theorem 2 in various ways, but here we borrow the argument in \cite{KT}. 

\begin{proof}
We use the contour integral. Let $C'$ be the standard contour, which consists of a path from infinity to (sufficiently small) $\epsilon$, a circle $ C_\epsilon$ around the origin of radius $\epsilon$, and a path from $\epsilon$ to infinity. Let
\begin{multline*}
H^{\lambda}(\mathbf{k} ; s_1, \cdots , s_c) =\\
 \frac{1}{\Gamma(s_1) \cdots \Gamma(s_c)} \int_{C'} \cdots \int_{C'} z_1^{s_1-1}  \cdots z_c^{s_c-1} \frac{{Li}_{\mathbf{k}}^\lambda(1-e^{-z_1}, \cdots , 1-e^{-z_c})}{(e^{z_1}-1) \cdots (e^{z_c}-1)} dz_1 \cdots dz_c .\\
\end{multline*}
Then the right-hand side equals
\begin{eqnarray*}
& & \prod_{j=1}^c (e^{2 \pi i s_j}-1)  \int_{\epsilon}^{\infty} \cdots \int_{\epsilon}^{\infty} z_1^{s_1-1}  \cdots z_c^{s_c-1} \frac{Li_{\mathbf{k}}^\lambda(1-e^{-z_1}, \cdots , 1-e^{-z_c})}{(e^{z_1}-1) \cdots (e^{z_c}-1)} dz_1 \cdots dz_c  \\
&+& \prod_{j=2}^c (e^{2 \pi i s_j}-1) \int_{C_\epsilon}  \int_{\epsilon}^{\infty} \cdots \int_{\epsilon}^{\infty} z_1^{s_1-1}  \cdots z_c^{s_c-1} \frac{{Li}_{\mathbf{k}}^\lambda(1-e^{-z_1}, \cdots , 1-e^{-z_c})}{(e^{z_1}-1) \cdots (e^{z_c}-1)} dz_1 \cdots dz_c  \\
&+& \cdots \\
&+&  (e^{2 \pi i s_c}-1) \int_{C_\epsilon}  \cdots \int_{C_\epsilon}  \int_{\epsilon}^{\infty} z_1^{s_1-1}  \cdots z_c^{s_c-1} \frac{{Li}_{\mathbf{k}}^\lambda(1-e^{-z_1}, \cdots , 1-e^{-z_c})}{(e^{z_1}-1) \cdots (e^{z_c}-1)} dz_1 \cdots dz_c  \\
&+& \int_{C_{\epsilon}} \cdots \int_{C_{\epsilon}}  z_1^{s_1-1}  \cdots z_c^{s_c-1} \frac{{Li}_{\mathbf{k}}^\lambda(1-e^{-z_1}, \cdots , 1-e^{-z_c})}{(e^{z_1}-1) \cdots (e^{z_c}-1)} dz_1 \cdots dz_c  .
 \end{eqnarray*}
 $H^{\lambda}(\mathbf{k} ; s_1, \cdots , s_c) $ is entire because the integral has no singularity and is absolutely convergent for all $s_i \in \mathbb{C} (i=1, \cdots, c)$. Suppose ${\rm Re}(s_i) > 0$, then  $\displaystyle{\int_{C_\epsilon}}$ tends to 0 as $\epsilon \to 0$. Hence,
 \begin{align*}
 \xi^{\lambda}( \mathbf{k} ; s_1,  \cdots , s_c)=  \frac{1}{\prod_{j=1}^c (e^{2 \pi i s_j}-1) \Gamma(s_j) } H^{\lambda}(\mathbf{k} ; s_1, \cdots , s_c),
\end{align*}
which can be analytically continued to ${ \mathbb C}^n$ and is entire, and $\xi^{\lambda}$ is holomorphic for ${\rm Re}(s_1), \cdots , {\rm Re}(s_c) > 0$. 
For integers $m_1, \cdots, m_c\geq 1$,
 \begin{eqnarray*}
\xi^{\lambda}(\mathbf{k}; -m_1, \cdots , -m_c) &=& \prod_{j=1}^c \frac{(-1)^{m_j} {m_j} !}{(2 \pi i)^c}   H^{\lambda}(\mathbf{k} ; -m_1, \cdots , -m_c)  \\
&=& \frac{(-1)^{m_1 + \cdots + m_c} m_1! \cdots m_c !}{(2 \pi i)^c}   \sum_{a_1, \cdots ,a_c = 0}^{\infty} \frac{\mathbb{C}_{a_1,\cdots, a_c}^{\lambda, \mathbf{k}}}{a_1! \cdots a_c!} \\
&&  \int_{C_\epsilon} \cdots \int_{C_\epsilon} z_1^{-m_1 - 1 + a_1} \cdots z_c^{-m_c - 1 + a_c} dz_1 \cdots dz_c \\
&=&  \frac{(-1)^{m_1 + \cdots + m_c}  m_1! \cdots m_c !}{(2 \pi i)^c}\frac{\mathbb{C}_{m_1,\cdots, m_c}^{\lambda, \mathbf{k}}}{m_1! \cdots m_c!}  (2 \pi i)^c \\
&=&  (-1)^{\sum_{j=1}^c m_j}\mathbb{C}_{m_1, \cdots , m_c}^{\lambda, \mathbf{k}} .
 \end{eqnarray*} 
 \end{proof}

Note that we can also prove Theorems $\ref{xi}$ by the method in \cite{AKbook}, which divides the integral  in $\xi^{ \lambda}$ into two parts, $\int_0^{\infty} = \int_0^{1} + \int_1^{\infty} $.

\section{Hook-type Bernoulli numbers}
In this section, we restrict $\lambda$ to the hook type, that is, $\lambda=(h, 1^{\ell-1})$.
In what follows in this section, we use $z_h=z_{1h}$ and
$z_{\ell}=z_{\ell 1}$   for brevity, and we use $n$ and $m$ instead of $m_{1h}$ and
$m_{\ell 1}$ for the sake of clarity.
We write ${\bf k}=
\ytableausetup{boxsize=normal}  
\begin{ytableau}
k_{11} & k_{12} & \cdots & k_{1h}\\
k_{21} \\
\vdots \\
k_{\ell 1} \\
\end{ytableau}$ \quad as
${ \bf k}=(k_{\ell 1}, \cdots, k_{21} | k_{11}, k_{12}, \cdots, k_{1h})$, from bottom to top and left to right, which means $ k_{\ell 1} > \cdots > k_{21} > k_{11} \leq k_{12} \leq \cdots \leq k_{1h} $.
\begin{Lemma} \label{bibn}
For $\lambda = (h,1^{\ell-1})$ and $ \mathbf{k}=(k_{\ell 1}, \cdots, k_{21}| k_{11}, k_{12}, \cdots, k_{1h}) \in T(\lambda, \mathbb{N})$, 
\begin{eqnarray}
& \displaystyle{\frac{\partial}{\partial z_h}{Li}_\mathbf{k}^{\lambda}(z_h, z_\ell)
=\frac{1}{z_h}{Li}_
{(k_{\ell 1}, \cdots, k_{21}| k_{11}, k_{12}, \cdots, k_{1h}-1)}
^{\lambda}(z_h, z_\ell)},  \label{nlbibn1} \\
& \displaystyle{\frac{\partial}{\partial z_\ell}{Li}_\mathbf{k}^{\lambda}(z_h, z_\ell)
=\frac{1}{z_\ell}{Li}_
{(k_{\ell 1}-1, \cdots, k_{21}| k_{11}, k_{12}, \cdots, k_{1h})}
^{\lambda}(z_h, z_\ell)}, \label{nlbibn2} \\
& \displaystyle{\frac{\partial^2}{\partial z_h\partial z_\ell}{Li}_\mathbf{k}^{\lambda}(z_h, z_\ell)
 =\frac{1}{z_h z_\ell}{Li}_
 {(k_{\ell 1}-1, \cdots, k_{21}| k_{11}, k_{12}, \cdots, k_{1h}-1)}
^{\lambda}(z_h, z_\ell)}. \label{nlbibn3}
\end{eqnarray}
\end{Lemma}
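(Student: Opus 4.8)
The plan is to prove all three identities by direct termwise differentiation of the defining series \eqref{schurpolylog}. The key structural point is that for the hook shape $\lambda = (h,1^{\ell-1})$ the corners are exactly the top-right box $(1,h)$ and the bottom-left box $(\ell,1)$, so $C(\lambda)=\{(1,h),(\ell,1)\}$ and the entire $\mathbf{z}$-dependence of $Li_{\mathbf{k}}^\lambda$ is concentrated in the single monomial $z_h^{m_{1h}}z_\ell^{m_{\ell 1}}$. Concretely, I would first rewrite the series as
\[
Li_{\mathbf{k}}^\lambda(z_h, z_\ell) = \sum_{\substack{0 < m_{11} \leq \cdots \leq m_{1h} \\ m_{11} < m_{21} < \cdots < m_{\ell 1}}} \frac{z_h^{m_{1h}}\, z_\ell^{m_{\ell 1}}}{m_{11}^{k_{11}} \cdots m_{1h}^{k_{1h}}\, m_{21}^{k_{21}} \cdots m_{\ell 1}^{k_{\ell 1}}},
\]
so that every $m_{ij}$ other than $m_{1h}$ and $m_{\ell 1}$ enters only through the denominator, while $z_h$ and $z_\ell$ enter only through the two corner exponents.

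Next I would justify differentiating under the summation sign. This is routine: as noted just after Definition \ref{xidef}, the series is dominated by $\zeta_\lambda(\mathbf{k})$ when $|z_h|,|z_\ell|\le 1$, and on any compact polydisk $|z_h|,|z_\ell|\le\rho<1$ the termwise derivatives converge uniformly, since the extra polynomial factors $m_{1h}$ and $m_{\ell 1}$ are absorbed by the geometric decay of $\rho^{m_{1h}}$ and $\rho^{m_{\ell 1}}$. Hence $Li_{\mathbf{k}}^\lambda$ is holomorphic on the open polydisk and may be differentiated term by term.

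For \eqref{nlbibn1}, applying $\partial/\partial z_h$ to each summand uses only $\partial_{z_h} z_h^{m_{1h}} = m_{1h} z_h^{m_{1h}-1} = z_h^{-1} m_{1h} z_h^{m_{1h}}$. The factor $m_{1h}$ produced in the numerator cancels one power against $m_{1h}^{-k_{1h}}$, replacing $k_{1h}$ by $k_{1h}-1$ while leaving all other indices, the summation range, and the corner monomial unchanged; pulling out the common $z_h^{-1}$ then gives exactly $z_h^{-1} Li^\lambda_{(k_{\ell 1}, \ldots, k_{21}\mid k_{11}, \ldots, k_{1h}-1)}(z_h, z_\ell)$. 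Identity \eqref{nlbibn2} is proved verbatim with $(z_\ell, m_{\ell 1}, k_{\ell 1})$ in place of $(z_h, m_{1h}, k_{1h})$, since $z_\ell$ meets the series only through $z_\ell^{m_{\ell 1}}$. Finally \eqref{nlbibn3} follows by composing the two operators: because the two corner factors are independent, $\partial^2/\partial z_h \partial z_\ell$ applied termwise produces the product $m_{1h} m_{\ell 1}$, lowering both $k_{1h}$ and $k_{\ell 1}$ simultaneously, which is the asserted right-hand side.

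There is essentially no serious obstacle here; the only point requiring care is the justification of termwise differentiation, and the only bookkeeping subtlety is to record that lowering a corner index by one --- possibly to the value $0$ --- keeps the series convergent, since convergence on the polydisk is forced by the factors $|z_h|^{m_{1h}}$ and $|z_\ell|^{m_{\ell 1}}$ rather than by the size of the exponents $k_{1h}, k_{\ell 1}$.
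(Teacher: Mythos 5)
Your proposal is correct and follows essentially the same route as the paper: the paper's proof is precisely the termwise differentiation of the series, using the fact that the $\mathbf{z}$-dependence sits entirely in the corner monomial $z_h^{m_{1h}}z_\ell^{m_{\ell 1}}$, so that the factor $m_{1h}$ (resp.\ $m_{\ell 1}$) produced by differentiation lowers $k_{1h}$ (resp.\ $k_{\ell 1}$) by one. The only difference is that you explicitly justify differentiating under the summation sign via uniform convergence on compact polydisks, a point the paper passes over in silence.
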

\begin{proof}
For $( \ref{nlbibn1} )$,  a direct calculation leads to
 \begin{align*}
\frac{\partial}{\partial z_h}{Li}_\mathbf{k}^{\lambda}(z_h, z_\ell)
&=\sum_{\substack{m_{11}\leq \cdots \leq m_{1h} \\ m_{11} <\cdots <m_{\ell 1}}}
\frac{m_{1h}z_h^{m_{1h}-1}z_\ell^{m_{\ell 1}}}
{m_{\ell 1}^{k_{\ell 1}}\cdots m_{11}^{k_{11}} \cdots m_{1h}^{k_{1h}}}\\
&=\frac{1}{z_h}\sum_{\substack{m_{11}\leq \cdots m_{1h} \\ m_{11} <\cdots <m_{\ell 1}}}
\frac{z_h^{m_{1h}}z_\ell^{m_{\ell 1}}}{m_{\ell 1}^{k_{\ell 1}}\cdots m_{11}^{k_{11}} \cdots m_{1h}^{k_{1h}-1}}\\
&=\frac{1}{z_h}{Li}_{(k_{\ell 1}, \cdots, k_{21}| k_{11}, k_{12}, \cdots, k_{1h}-1)}^{\lambda}(z_h, z_\ell).
\end{align*}
We can obtain $( \ref{nlbibn2} )$ and $ (\ref{nlbibn3} )$ similarly.
\end{proof}

We can define a generalization of the $\eta$-functions \eqref{KTMZ} in the same way as in Definition $\ref{xidef}$.
\begin{Definition} 
Let
\begin{multline*}
\eta^{\lambda}( \mathbf{k} ; s_1, \cdots ,s_c) =  \frac{1}{\Gamma(s_1) \cdots \Gamma(s_c)} \\
\int_{0}^\infty \cdots \int_{0}^\infty z_1^{s_1-1}  \cdots z_c^{s_c-1} \frac{{Li}_{\mathbf{k}}^\lambda(1-e^{z_1}, \cdots , 1-e^{z_c})}{(1-e^{z_1}) \cdots (1-e^{z_c})} dz_1 \cdots dz_c
\end{multline*}
for $s_i \in \mathbb{C}$ with ${\rm Re}(s_i) > 1$.
\end{Definition}

For $\lambda = (h, 1^{\ell-1})$, the integral in $\eta^{\lambda} (\mathbf{k}; s_h, s_\ell)$ converges absolutely in ${\rm Re}(s_h) > 0$ and ${\rm Re}(s_\ell) > 1- \ell$ , as is seen from the following lemma which is extension of Lemma 2.2 in $\cite{KT}$. 

\begin{Lemma}
$(i)$ For $\mathbf{k} \in \mathbb{N}^{|\mathbf{k}|}$ and $ \lambda = (h, 1^{\ell-1}) $, $ {Li}_{\mathbf{k}}^\lambda(1-e^{z_h},  1-e^{z_\ell})$ is holomorphic for $z_h, z_\ell \in \mathbb{C} $ with $ | {\rm Im}(z_h) |$ and $| {\rm Im}(z_\ell) | < \pi $. \\
$(ii)$ For $\mathbf{k} \in \mathbb{N}^{|\mathbf{k}|} $ and $z_j \in \mathbb{R}_{>0}$, we have
\begin{equation} \label{Li0}
  {Li}_{\mathbf{k}}^\lambda(1-e^{z_h},  1-e^{z_\ell}) =  O(z_h z_\ell^\ell) \quad (z_h, z_\ell \to 0)
\end{equation} and
\begin{equation} \label{Liinfty}
  {Li}_{\mathbf{k}}^\lambda(1-e^{z_h},  1-e^{z_\ell})  = O(z_h^{k_{1h}} z_\ell^{k_{21}+ k_{31} + \cdots + k_{{\ell1}}}) \quad(z_h, z_\ell \to \infty).
\end{equation}
\end{Lemma}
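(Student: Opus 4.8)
The plan is to treat the two assertions separately, reducing each to facts about the one-variable (multiple) polylogarithm evaluated at $1-e^{z}$, in the spirit of \cite[Lemma 2.2]{KT}. Throughout, the guiding principle is that the two corner variables $z_h$ and $z_\ell$ are coupled only through the shared corner value $m_{11}$, so that the analysis splits into an $h$-box ``row chain'' carrying $z_h$ at its top and an $\ell$-box ``column chain'' carrying $z_\ell$ at its top.

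For $(i)$ the first step is purely geometric: I would check that $z\mapsto 1-e^{z}$ maps the strip $\{|{\rm Im}(z)|<\pi\}$ into the cut plane $\mathbb{C}\setminus[1,\infty)$. Indeed $1-e^{z}\in[1,\infty)$ forces $e^{z}\in(-\infty,0]$, i.e. ${\rm Im}(z)\equiv\pi\pmod{2\pi}$, which is excluded on the strip; in fact $e^{z}$ then ranges over $\mathbb{C}\setminus(-\infty,0]$, so $1-e^{z}$ ranges over exactly $\mathbb{C}\setminus[1,\infty)$. It therefore suffices to continue ${Li}_{\mathbf{k}}^{\lambda}(z_h,z_\ell)$ holomorphically from the polydisc $|z_h|,|z_\ell|<1$ (where the defining series converges absolutely, being dominated by $\zeta_\lambda(\mathbf{k})$) to $(\mathbb{C}\setminus[1,\infty))^2$. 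This continuation I would get from Lemma \ref{bibn}: the derivatives $\partial_{z_h},\partial_{z_\ell}$ lower only the two corner indices $k_{1h},k_{\ell 1}$, and the attendant kernels $1/z_h,1/z_\ell$ (together with the $1/(1-z_h),1/(1-z_\ell)$ kernels that appear once a corner index reaches $1$) are holomorphic off $\{0,1\}$. Integrating these along paths in $\mathbb{C}\setminus[1,\infty)$ represents ${Li}_{\mathbf{k}}^{\lambda}$ as a finite iterated integral, hence a single-valued holomorphic function there, and composing with $1-e^{z}$ in each variable yields $(i)$.

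The behaviour as $z_h,z_\ell\to 0$ in $(ii)$ is the easy half: since $1-e^{z}\to 0$ we stay inside the polydisc and may argue from the series directly. The monomial of lowest total degree comes from the minimal tableau: $m_{1h}=1$ forces the entire first row to equal $1$, after which $1=m_{11}<m_{21}<\cdots<m_{\ell 1}$ forces $m_{\ell 1}\ge\ell$. Hence, using $1-e^{z}\sim -z$, the leading term is a constant multiple of $z_h\,z_\ell^{\ell}$ and every other term is of strictly higher order, giving $Li_{\mathbf{k}}^{\lambda}(1-e^{z_h},1-e^{z_\ell})=O(z_h z_\ell^{\ell})$.

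The behaviour as $z_h,z_\ell\to\infty$ is the main obstacle, because then $1-e^{z}\to-\infty$ leaves the polydisc and the defining series is useless; here I would use the continuation from $(i)$ together with the asymptotics of the one-variable polylogarithm at $-\infty$, which are polynomial in the logarithm of the argument. Since $\log|1-e^{z}|\sim z$ as $z\to\infty$, a log-power of degree $d$ becomes a bound $O(z^{d})$. Organizing the computation by the shared corner value $m_{11}=m$, so that inside the polydisc one has $Li_{\mathbf{k}}^{\lambda}=\sum_{m\ge 1}m^{-k_{11}}R_m(1-e^{z_h})\,C_m(1-e^{z_\ell})$ with $R_m,C_m$ honest single-variable multiple polylogarithms, makes transparent why the $z_h$-growth is governed by the row chain through its corner index $k_{1h}$ and the $z_\ell$-growth by the full column chain of weight $k_{21}+\cdots+k_{\ell 1}$, reproducing the stated powers. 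The delicate points, which I expect to absorb most of the work, are (a) that this factorization is literally valid only inside the polydisc, so the estimates must really be run on the analytically continued iterated integral rather than the divergent series, and (b) controlling the sum over $m$ uniformly so that the termwise bounds assemble into the stated estimate; both are the two-variable analogues of the single-index estimates of \cite[Lemma 2.2]{KT}.
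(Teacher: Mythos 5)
Your part $(i)$ and your treatment of the $z_h,z_\ell\to 0$ estimate are correct and essentially identical to the paper's own proof: the paper likewise observes that $1-e^{z}\in[1,\infty)$ forces ${\rm Im}(z)$ to be an odd multiple of $\pi$, continues $Li^{\lambda}_{\mathbf{k}}$ off the polydisc via its iterated-integral representation, and reads off the leading term $z_h z_\ell^{\ell}$ from the minimal tableau. The genuine gap is in the $z_h,z_\ell\to\infty$ estimate \eqref{Liinfty}, which is the heart of the lemma: you reduce it to your ``delicate points'' (a) and (b) and leave both open, and moreover the route you sketch cannot be completed in the form described. The inner sums $R_m$ and $C_m$ are not honest multiple polylogarithms but truncated ones (sums starting at the shared corner value $m$); for instance, for $\lambda=(2,1)$ one has $R_m(z)=Li_{k_{12}}(z)-\sum_{n=1}^{m-1}z^{n}/n^{k_{12}}$, whose analytic continuation grows like $|z|^{m-1}$ as $z\to-\infty$, not like a power of $\log|z|$. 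Consequently the termwise bounds you would sum over $m$ are polynomial in $|z_h|,|z_\ell|$ with degree growing in $m$, the series $\sum_{m}m^{-k_{11}}R_m(1-e^{z_h})C_m(1-e^{z_\ell})$ diverges pointwise off the polydisc, and the logarithmic-power bound \eqref{Liinfty} arises only from cancellation between the $m$-terms, which no ``uniform control of termwise bounds'' can capture. So point (b) is not merely technical work to be absorbed; as stated, it is false.

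The paper's proof avoids the decomposition over $m_{11}$ altogether and inducts on the two corner entries using Lemma \ref{bibn}: by \eqref{nlbibn3},
\begin{equation*}
Li^{\lambda}_{\mathbf{k}}(1-e^{z_h},1-e^{z_\ell})
=\int_0^{z_h}\int_0^{z_\ell}\frac{Li^{\lambda}_{\mathbf{k}^{-}}(1-e^{s},1-e^{t})}{(e^{s}-1)(e^{t}-1)}\,e^{s}e^{t}\,dt\,ds ,
\end{equation*}
where $\mathbf{k}^{-}=(k_{\ell 1}-1,\cdots,k_{21}\,|\,k_{11},\cdots,k_{1h}-1)$. Splitting the domain at a small $\epsilon$ into four rectangles, the three pieces meeting the axes stay bounded, while on $[\epsilon,z_h]\times[\epsilon,z_\ell]$ the inductive hypothesis $Li^{\lambda}_{\mathbf{k}^{-}}(1-e^{s},1-e^{t})=O\bigl(s^{k_{1h}-1}t^{k_{21}+\cdots+k_{\ell 1}-1}\bigr)$ integrates to $O\bigl(z_h^{k_{1h}}z_\ell^{k_{21}+\cdots+k_{\ell 1}}\bigr)$, gaining exactly one power in each variable per step. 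This induction through the differential relations of Lemma \ref{bibn} is the mechanism that must replace your points (a) and (b); your factorization over the shared corner value can serve at most as a heuristic for why the row contributes only its corner index $k_{1h}$ while the column contributes its full weight.
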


\begin{proof}
(i)  By definition, ${Li}_{\mathbf{k}}^\lambda({z_h}, {z_\ell})$ is holomorphic for $ \mathbb{C} \backslash [ 1, \infty ) $. And ${Li}_{\mathbf{k}}^\lambda({z_h}, {z_\ell})$ is iterated integration: for $N = k_{12} + \cdots + k_{1h}$ and $M = k_{21} + \cdots + k_{\ell1} $, \\
\begin{align*}
 Li_{\mathbf{k}}^{\lambda} (z_h, z_\ell) = \underbrace{ \int_0^{z_h} \cdots \int_0^{z_h}}_{N \text{ times}} & \underbrace{\int_0^{z_\ell} \cdots \int_0^{z_\ell}}_{M \text{ times}}  \left( \frac{1}{z_h} \right)^{N-(h-1)} \left( \frac{1}{1-z_h} \right)^{h-1} \\
  & \left( \frac{1}{z_\ell} \right)^{M-(\ell-1)} \left( \frac{1}{1-z_\ell} \right)^{\ell-1} Li_{\mathbf{k}_{11}}^{(1)} (z_h z_\ell)  \underbrace{dz_{\ell} \cdots dz_{\ell}}_{M \text{ times}}  \underbrace{dz_{h} \cdots dz_{h}}_{N \text{ times}}    .
\end{align*}
Note that $1- e^{z_h}, 1-e^{z_\ell} \in [1,\infty)$  is equivalent to ${\rm Im}(z_h) $ and $ {\rm Im}(z_\ell) = (2j+1)\pi$ for any $j \in \mathbb{Z}$. \\
(ii) We have
 \begin{eqnarray*}
{Li}_{\mathbf{k}}^\lambda(z_h,  z_\ell) &=& \sum_{(m_{ij}) \in SSYT(\lambda)} \frac{{z_h}^{m_{1h}} (z_\ell )^{m_{\ell 1}}}{m_{11}^{k_{11}} \cdots m_{1h}^{k_{1h}} \cdots m_{\ell 1}^{k_{\ell 1}} } \\
&=& \frac{(z_h)^1 (z_\ell )^\ell}{1^{k_{11}} \cdots  1^{k_{1h}} 2^{k_{21}} 3^{k_{31}} \cdots \ell^{k_{\ell 1}} } + O(z_h^2 z_\ell^{\ell+1}) \quad (z_h, z_\ell \to 0).
 \end{eqnarray*}
 By this fomula, we have ${Li}_{\mathbf{k}}^\lambda(z_h,  z_\ell) = O(z_h^1 z_\ell^\ell ) $ as $ z_h, z_\ell \rightarrow 0$. Replacing $z_h$ and $z_\ell$ with $1-e^{z_h}$ and $1-e^{z_\ell}$ respectively, then 
  \begin{eqnarray*}
  {Li}_{\mathbf{k}}^\lambda(1-e^{z_h},  1-e^{z_\ell}) = O((1-e^{z_h})^1 (1-e^{z_\ell})^\ell) =  O(z_h z_\ell^\ell ) \quad (z_h, z_\ell \rightarrow 0).
  \end{eqnarray*} 
 Suppose $(k_{\ell 1}-1, \cdots, k_{21}| k_{11}, k_{12}, \cdots, k_{1h}-1) $ satisfies $\eqref{Liinfty}$. Using $\eqref{nlbibn3}$, we have
    \begin{eqnarray*}
    |  {Li}_{\mathbf{k}}^\lambda(1-e^{z_h},  1-e^{z_\ell}) | &=& \left | \int_0^{1-e^{z_h}} \int_0^{1-e^{z_\ell}} \frac{{Li}_{(k_{\ell 1}-1, \cdots, k_{21}| k_{11}, k_{12}, \cdots, k_{1h}-1)}^\lambda (u, v)}{uv} dvdu \right |.
      \end{eqnarray*}
      Repacing $u = 1-e^s, v= 1-e^t$, we have
      \begin{eqnarray*}
      &=&  \left | \int_0^{z_h} \int_0^{z_\ell} \frac{{Li}_{(k_{\ell 1}-1, \cdots, k_{21}| k_{11}, k_{12}, \cdots, k_{1h}-1)}^\lambda (1-e^s, 1-e^t)}{(1-e^s)(1-e^t)}(-e^s)(-e^t) dtds \right | \\
      &\leq& \int_0^{\epsilon} \int_0^{\epsilon} \left |  \frac{{Li}_{(k_{\ell 1}-1, \cdots, k_{21}| k_{11}, k_{12}, \cdots, k_{1h}-1)}^\lambda (1-e^s, 1-e^t)}{(e^s-1)(e^t-1)}e^s e^t \right | dtds  \\
      &+&  \int_0^{\epsilon} \int_\epsilon^{z_\ell} \left | \frac{{Li}_{(k_{\ell 1}-1, \cdots, k_{21}| k_{11}, k_{12}, \cdots, k_{1h}-1)}^\lambda (1-e^s, 1-e^t)}{(e^s-1)(e^t-1)}e^s e^t \right | dtds  \\
      &+&  \int_\epsilon^{z_h} \int_0^{\epsilon} \left | \frac{{Li}_{(k_{\ell 1}-1, \cdots, k_{21}| k_{11}, k_{12}, \cdots, k_{1h}-1)}^\lambda (1-e^s, 1-e^t)}{(e^s-1)(e^t-1)}e^s e^t \right | dtds \\
      &+&  \int_\epsilon^{z_h} \int_\epsilon^{z_\ell} \left | \frac{{Li}_{(k_{\ell 1}-1, \cdots, k_{21}| k_{11}, k_{12}, \cdots, k_{1h}-1)}^\lambda (1-e^s, 1-e^t)}{(e^s-1)(e^t-1)}e^s e^t \right | dtds 
      \end{eqnarray*}
      where $\epsilon$ is sufficiently small. First three terms in the right-hand side are $O(1)$ as $\epsilon \to 0$. From the assumption, the last term is $O(s^{k_{1h}-1} t^{k_{21}+ k_{31} + \cdots + k_{{\ell1}}-1})$ as $s,t \to \infty$. Therefore the last term is $O(z_h^{k_{1h}} z_\ell^{k_{21}+ k_{31} + \cdots + k_{{\ell1}}})$ as $z_h, z_\ell \to \infty $. We have $\eqref{Liinfty}$ by induction.
\end{proof}

It is natural questions to ask the analytically continuation of $\eta^\lambda (\mathbf{k}: s_1, \cdots, s_c)$ to an entire function on the whole $\mathbf{C}^c$ plane. For general $\lambda$, however, we have the difficulty  in the estimate of this type for $Li_{\mathbf{k}}^\lambda (1-e^{z_1}, \cdots, 1-e^{z_c})$ because we do not have the differentiation for general aspects in Lemma $\ref{bibn}$.

\begin{Theorem} \label{eta}
The function $ \eta^{\lambda}( \mathbf{k} ; s_1, s_2) $ of hook-type $\lambda$ can be analytically continued to an entire function on the whole ${\mathbb C}^2$ plane. 
In addition,
\begin{equation}
 \eta^{\lambda}(\mathbf{k} ; -m_1, -m_2) = \mathbb{B}_{m_1, m_2}^{\lambda, \mathbf{k}} .
 \end{equation}
\end{Theorem}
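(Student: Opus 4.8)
The plan is to mirror the proof of Theorem~\ref{xi}, carrying over the Kaneko--Tsumura contour argument of \cite{KT} to the present setting. A hook $\lambda=(h,1^{\ell-1})$ has exactly two corners, $(1,h)$ and $(\ell,1)$, so $c=2$ and the two integration variables are $z_1=z_h$, $z_2=z_\ell$ with parameters $s_1=s_h$, $s_2=s_\ell$. First I would introduce the double contour integral
\begin{multline*}
H^\lambda(\mathbf{k};s_1,s_2)=\frac{1}{\Gamma(s_1)\Gamma(s_2)}\\
\int_{C'}\int_{C'}z_1^{s_1-1}z_2^{s_2-1}\frac{Li_{\mathbf{k}}^\lambda(1-e^{z_1},1-e^{z_2})}{(1-e^{z_1})(1-e^{z_2})}\,dz_1\,dz_2,
\end{multline*}
with $C'$ the same standard contour as before, and expand $\int_{C'}\int_{C'}$ into its $2^2=4$ terms carrying the jump factors $(e^{2\pi i s_1}-1)$, $(e^{2\pi i s_2}-1)$ and the small-circle pieces $\int_{C_\epsilon}$, just as in the expansion of $H^\lambda$ in the proof of Theorem~\ref{xi}.

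The step where this proof genuinely departs from that of Theorem~\ref{xi} is the proof that $H^\lambda$ is entire, and this is the part I expect to be the main obstacle. In the $\xi$-case the crude bound $Li_{\mathbf{k}}^\lambda\le\zeta_\lambda(\mathbf{k})$ made convergence automatic, but here the argument $1-e^{z_j}$ runs off to $-\infty$, so $Li_{\mathbf{k}}^\lambda$ is unbounded and one must instead invoke the growth estimates of the preceding lemma, which is precisely why $\lambda$ is restricted to hook type (they rest on the differentiation formulas of Lemma~\ref{bibn}, unavailable in general). The holomorphy assertion of that lemma lets me keep $C'$ inside the strip $|{\rm Im}(z_j)|<\pi$ on which $Li_{\mathbf{k}}^\lambda(1-e^{z_1},1-e^{z_2})$ is holomorphic, and \eqref{Liinfty} shows the numerator grows only like $z_1^{k_{1h}}z_2^{k_{21}+\cdots+k_{\ell1}}$, which is dominated by the exponential decay of $1/\big((1-e^{z_1})(1-e^{z_2})\big)$ as $z_j\to+\infty$. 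Hence the contour integral converges absolutely for every $(s_1,s_2)\in\mathbb{C}^2$ and $H^\lambda$ is entire.

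Next I would let $\epsilon\to0$. Using \eqref{Li0}, namely $Li_{\mathbf{k}}^\lambda(1-e^{z_1},1-e^{z_2})=O(z_1 z_2^\ell)$ near the origin, dividing by $(1-e^{z_1})(1-e^{z_2})=O(z_1z_2)$ makes the integrand $O\big(z_1^{s_1-1}z_2^{s_2+\ell-2}\big)$, so every term containing a factor $\int_{C_\epsilon}$ vanishes exactly on ${\rm Re}(s_1)>0$, ${\rm Re}(s_2)>1-\ell$. This yields $\eta^\lambda(\mathbf{k};s_1,s_2)=H^\lambda(\mathbf{k};s_1,s_2)\big/\prod_{j=1}^2(e^{2\pi i s_j}-1)$ on that region. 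Since $H^\lambda$ is entire and its zeros at the integers---coming from $1/\Gamma(s_j)$ at the non-positive integers and from the contour integral at the positive integers---cancel the simple poles of $1/(e^{2\pi i s_j}-1)$, the quotient is entire, giving the continuation of $\eta^\lambda$ to all of $\mathbb{C}^2$.

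Finally, to evaluate at $(s_1,s_2)=(-m_1,-m_2)$ I would use $\frac{1}{\Gamma(s_j)(e^{2\pi i s_j}-1)}\to\frac{(-1)^{m_j}m_j!}{2\pi i}$ as $s_j\to-m_j$, while the surviving double-circle integral $\int_{C_\epsilon}\int_{C_\epsilon}z_1^{-m_1-1}z_2^{-m_2-1}F\,dz_1dz_2$ equals $(2\pi i)^2$ times the coefficient of $z_1^{m_1}z_2^{m_2}$ in $F(z_1,z_2)=\frac{Li_{\mathbf{k}}^\lambda(1-e^{z_1},1-e^{z_2})}{(1-e^{z_1})(1-e^{z_2})}$. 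The one point to watch is the sign: $F(z_1,z_2)$ is the generating function \eqref{defBernoulli} of $\mathbb{B}_{m_1,m_2}^{\lambda,\mathbf{k}}$ with $z_j$ replaced by $-z_j$, so that coefficient is $(-1)^{m_1+m_2}\mathbb{B}_{m_1,m_2}^{\lambda,\mathbf{k}}/(m_1!m_2!)$; the two factors $(-1)^{m_1+m_2}$ then cancel and give $\eta^\lambda(\mathbf{k};-m_1,-m_2)=\mathbb{B}_{m_1,m_2}^{\lambda,\mathbf{k}}$, with no residual sign---in contrast to the $(-1)^{\sum m_j}$ in Theorem~\ref{xi}, the discrepancy being exactly the switch $1-e^{-z}\leftrightarrow1-e^{z}$ between \eqref{defBernoulli} and the $\eta$-integrand.
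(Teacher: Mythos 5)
Your proposal is correct and takes essentially the same route as the paper: the paper's proof of Theorem \ref{eta} is literally a one-line appeal to the contour-integral method of Theorem \ref{xi}, and your argument is precisely that method carried out for $c=2$, with entirety secured by the hook-type growth lemma (\eqref{Li0}, \eqref{Liinfty}) and the sign cancellation between the prefactor $(-1)^{m_1+m_2}$ and the substitution $z_j\mapsto -z_j$ in \eqref{defBernoulli} handled correctly. Your write-up in fact records exactly the details (why the crude bound from the $\xi$-case fails, and why no residual sign survives) that the paper leaves implicit.
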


\begin{proof}
We apply the same method as for Theorem $\ref{xi}$. 
\end{proof}

We finally discuss some further properties in this situation such as a recurrence formula, a relation formula between
${\mathbb B_{n, m}^{\lambda, {\bf k}}}$ and ${\mathbb C_{n, m}^{\lambda, {\bf k}}}$ and a description in terms of the Stirling numbers.
First, using Lemma \ref{bibn}, we have the following proposition.
\begin{Proposition}
\begin{align*}
\sum_{n, m=0}^{\infty}
{\mathbb B}_{n, m}^{\lambda , \mathbf{k}^-}\frac{z_h^nz_\ell^m}{n!m!}
=&
\sum_{n, m=0}^{\infty}{\mathbb B}_{n, m}^{\lambda, \mathbf{k}}\frac{z_h^n z_\ell^m}{n! m!}
+
\sum_{v=1}^{\infty} \left(\sum_{m=0}^{v-1} \left(\sum_{n=0}^{\infty}{\mathbb B}_{n, m+1}^{\lambda, \mathbf{k}}\frac{z_h^n}{n!}\left(\begin{matrix}v \\ m\end{matrix}\right) \right ) \right)
\frac{z_\ell^{v}}{v !}\\
+&
\sum_{v'=1}^{\infty} \left( \sum_{n=0}^{v'-1} \left(\sum_{m=0}^{\infty}{\mathbb B}_{n+1, m}^{\lambda, \mathbf{k}}\frac{z_\ell^m}{m!}\left(\begin{matrix}v' \\ n\end{matrix}\right) \right)\right)
\frac{z_h^{v'}}{v' !}\\
+&
\sum_{v'=1}^{\infty}\sum_{n=0}^{v'-1}\sum_{v=1}^{\infty}\sum_{m=0}^{v-1}
{\mathbb B}_{n+1, m+1}^{\lambda, \mathbf{k}}
\left(\begin{matrix}v \\ m\end{matrix} \right)
\left(\begin{matrix}v' \\ n\end{matrix} \right)
\frac{z_\ell^{v}}{v!}\frac{z_h^{v'}}{v'!} 
,
\end{align*}
where ${ \bf k}^-$ means ${(k_{\ell 1}-1, \cdots, k_{21}| k_{11}, k_{12}, \cdots, k_{1h}-1)}$.
\end{Proposition}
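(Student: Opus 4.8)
The plan is to start from the generating function identity in the definition of Schur type Bernoulli numbers, equation \eqref{defBernoulli}, specialized to the hook type $\lambda = (h, 1^{\ell-1})$ with two corner variables $z_h$ and $z_\ell$. Writing
$$
F(z_h, z_\ell) = \frac{{Li}_\mathbf{k}^{\lambda}(1-e^{-z_h}, 1-e^{-z_\ell})}{(1-e^{-z_h})(1-e^{-z_\ell})} = \sum_{n, m=0}^{\infty}{\mathbb B}_{n, m}^{\lambda, \mathbf{k}}\frac{z_h^n z_\ell^m}{n! m!},
$$
I would relate $F$ for the index $\mathbf{k}^-$ to $F$ for the index $\mathbf{k}$ by applying Lemma \ref{bibn}. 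Specifically, the key relation \eqref{nlbibn3} says that the mixed partial $\frac{\partial^2}{\partial z_h \partial z_\ell}$ of $Li_\mathbf{k}^\lambda$ lowers both relevant indices and introduces a factor $\frac{1}{z_h z_\ell}$. The idea is to translate this differentiation identity, after substituting $z_h \mapsto 1-e^{-z_h}$ and $z_\ell \mapsto 1-e^{-z_\ell}$, into a relation between the generating series for $\mathbf{k}^-$ and for $\mathbf{k}$.

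First I would carry out the substitution carefully. Writing $u = 1-e^{-z_h}$ and $v = 1-e^{-z_\ell}$, one has $\frac{\partial u}{\partial z_h} = e^{-z_h} = 1-u$ and likewise $\frac{\partial v}{\partial z_\ell} = 1-v$, so by the chain rule
$$
\frac{\partial^2}{\partial z_h \partial z_\ell} {Li}_\mathbf{k}^\lambda(u, v) = (1-u)(1-v)\,\frac{1}{uv}\,{Li}_{\mathbf{k}^-}^\lambda(u, v).
$$
Dividing both sides by $uv = (1-e^{-z_h})(1-e^{-z_\ell})$ and rearranging, this produces an expression for $F_{\mathbf{k}^-}(z_h,z_\ell)$ in terms of $\frac{\partial^2}{\partial z_h \partial z_\ell}$ applied to $\frac{{Li}_\mathbf{k}^\lambda(u,v)}{(1-u)(1-v)} \cdot$ (appropriate factors), where the factors $(1-u)^{-1}(1-v)^{-1} = e^{z_h}e^{z_\ell}$ combine neatly with the $(1-e^{-z_h})$ in the denominator of $F$. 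The plan is to reduce everything to differentiating $F_{\mathbf{k}}(z_h, z_\ell)$, whose Taylor expansion we know coefficient-wise.

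Next I would extract coefficients. Differentiating the double series $\sum {\mathbb B}_{n,m}^{\lambda,\mathbf{k}} \frac{z_h^n z_\ell^m}{n!m!}$ with respect to $z_h$ and $z_\ell$ shifts indices $n \mapsto n+1$, $m \mapsto m+1$, which is exactly why the terms ${\mathbb B}_{n+1, m+1}^{\lambda, \mathbf{k}}$, ${\mathbb B}_{n+1, m}^{\lambda, \mathbf{k}}$, and ${\mathbb B}_{n, m+1}^{\lambda, \mathbf{k}}$ appear on the right-hand side of the claimed identity. The single-variable sums and the pure $F_\mathbf{k}$ term correspond to the boundary contributions when only one of the two differentiations acts nontrivially (or neither does). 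The binomial coefficients $\binom{v}{m}$ and $\binom{v'}{n}$ together with the restricted ranges $m \le v-1$, $n \le v'-1$ should arise from expanding the exponential factors $e^{z_h}$ and $e^{z_\ell}$ (equivalently the geometric-type factors coming from $\frac{1}{1-e^{-z_\ell}}$) as power series and collecting coefficients of $\frac{z_h^{v'}}{v'!}\frac{z_\ell^v}{v!}$ via the Cauchy product; this is the standard mechanism that, in the single-variable case, converts a derivative-times-$e^z$ into a convolution with binomial weights.

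**The hard part will be** the bookkeeping of the four separate contributions and matching the exact index ranges and binomial weights. Concretely, the obstacle is to split the operator identity into the piece where both differentiations hit the series (yielding the quadruple sum with both binomials), the two pieces where exactly one differentiation is "absorbed" into an exponential expansion (yielding the two double/triple sums with a single binomial), and the piece where the exponential factors contribute only their constant term (yielding the bare $F_\mathbf{k}$ term). I would organize this by expanding $\frac{e^{z_h}}{z_h}$-type and $\frac{e^{z_\ell}}{z_\ell}$-type factors and tracking which power of $z$ lands where; the restricted summation bounds $0 \le m \le v-1$ and $0 \le n \le v'-1$ should fall out precisely because the shift-by-one from differentiation forces the surviving exponential power to be at least $1$. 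Once the four cases are separated and each coefficient is computed by a Cauchy product, comparing coefficients of $\frac{z_h^{v'} z_\ell^v}{v'! v!}$ on both sides yields the stated proposition.
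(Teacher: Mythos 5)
Your plan is exactly the paper's proof: differentiate the defining identity \eqref{defBernoulli} in both $z_h$ and $z_\ell$, use \eqref{nlbibn3} with the chain rule on the left (producing the factor $e^{-z_h}e^{-z_\ell}/(1-e^{-z_h})(1-e^{-z_\ell})$), apply the product rule on the right, multiply through by $e^{z_h}e^{z_\ell}$ to get the four terms with $(e^{z_h}-1)$, $(e^{z_\ell}-1)$ factors, and then expand these via Cauchy products, where the absence of a constant term in $e^{z}-1$ yields precisely the binomial weights and the truncated ranges $m\le v-1$, $n\le v'-1$. The bookkeeping you flag as the hard part is indeed what the paper's proof spends its length on (its terms $(R1)$ through $(R4)$), and your outline handles it correctly.
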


\begin{proof}
Equation \eqref{defBernoulli} can be expressed as
$$
{Li}_{\bf k}^{\lambda}(1-e^{-z_h}, 1-e^{-z_\ell})
=(1-e^{-z_h})(1-e^{-z_\ell})\sum_{n, m=0}^{\infty}{\mathbb B}_{n, m}^{\lambda, \mathbf{k}}\frac{z_h^n z_\ell^m}{n! m!}.
$$

Differentiating with respect to $z_h$ and $z_\ell$ gives, by \eqref{nlbibn3}, 
\begin{align*}
&\frac{e^{-z_h}e^{-z_\ell}}{(1-e^{-z_h})(1-e^{-z_\ell})}
{Li}_{(k_{\ell 1}-1, \cdots, k_{21}| k_{11}, k_{12}, \cdots, k_{1h}-1)}^{\lambda}(1-e^{-z_h}, 1-e^{-z_\ell})\\
&=e^{-z_h}e^{-z_\ell}\sum_{n, m=0}^{\infty}{\mathbb B}_{n, m}^{\lambda, \mathbf{k}}\frac{z_h^n z_\ell^m}{n! m!}
+e^{-z_h}(1-e^{-z_\ell})\sum_{n=0}^{\infty}\sum_{m=1}^{\infty}{\mathbb B}_{n, m}^{\lambda, \mathbf{k}}\frac{z_h^n z_\ell^{m-1}}{n! (m-1)!}\\
&+(1-e^{-z_h})e^{-z_\ell}\sum_{n=1}^{\infty}\sum_{m=0}^{\infty}{\mathbb B}_{n, m}^{\lambda, \mathbf{k}}\frac{z_h^{n-1} z_\ell^m}{(n-1)!m!}\\
& +(1-e^{-z_h})(1-e^{-z_\ell})\sum_{n, m=1}^{\infty}{\mathbb B}_{n, m}^{\lambda, \mathbf{k}}\frac{z_h^{n-1} z_\ell^{m-1}}{(n-1)! (m-1)!}
.
\end{align*}
Multiplying both sides by $e^{z_h}e^{z_\ell}$, we have
\begin{align} \label{R14}
&\sum_{n,m=0}^{\infty}
{\mathbb B}_{n, m}^{\lambda, (k_{\ell 1}-1, \cdots, k_{21}| k_{11}, k_{12}, \cdots, k_{1h}-1)}\frac{z_h^nz_\ell^m}{n!m!} \notag \\
&=\sum_{n, m=0}^{\infty}{\mathbb B}_{n, m}^{\lambda, \mathbf{k}}\frac{z_h^n z_\ell^m}{n! m!}
+(e^{z_\ell}-1)\sum_{n=0}^{\infty}\sum_{m=1}^{\infty}{\mathbb B}_{n, m}^{\lambda, \mathbf{k}}\frac{z_h^n z_\ell^{m-1}}{n! (m-1)!}\notag \\
&+(e^{z_h}-1)\sum_{n=1}^{\infty}\sum_{m=0}^{\infty}{\mathbb B}_{n, m}^{\lambda, \mathbf{k}}\frac{z_h^{n-1} z_\ell^m}{(n-1)!m!} 
+(e^{z_h}-1)(e^{z_\ell}-1)\sum_{n, m=1}^{\infty}{\mathbb B}_{n, m}^{\lambda, \mathbf{k}}\frac{z_h^{n-1} z_\ell^{m-1}}{(n-1)! (m-1)!}.
\end{align}
We write the right-hand side of \eqref{R14} as
$ (R1) + (R2) + (R3) + (R4)  .$\\
\\
The second term, $(R2)$, is
$$
(R2) =
\sum_{v=1}^{\infty}\frac{z_\ell^{v}}{v !}\sum_{m=1}^{\infty}
\left( \sum_{n=0}^{\infty}{\mathbb B}_{n, m}^{\lambda, \mathbf{k}}\frac{z_h^n}{n!} \right) \frac{z_\ell^{m-1}}{(m-1)!}.
$$
Setting $A_m=(\sum_{n=0}^{\infty}{\mathbb B}_{n, m}^{\lambda, { \bf k}}\frac{z_h^n}{n!})$ gives
\begin{align*}
(R2) &= \left(\sum_{v=0}^{\infty}\frac{z_\ell^v}{v !}-1\right)\sum_{m=0}^{\infty}A_{m+1}\frac{z_\ell^m}{m!}
=\sum_{v=0}^{\infty} \left( \sum_{m=0}^{v}A_{m+1}\frac{1}{m!(v-m)!} \right)z_\ell^{v}
-\sum_{m=0}^{\infty}A_{m+1}\frac{z_\ell^{m}}{m !}\\
&=\sum_{v=0}^{\infty} \left(\sum_{m=0}^{v}A_{m+1}\left(\begin{matrix}v \\ m\end{matrix}\right) \right)
\frac{z_\ell^{v}}{v !}
-\sum_{v=0}^\infty A_{v+1}\frac{z_\ell^{v}}{v !}
=\sum_{v=1}^{\infty} \left( \sum_{m=0}^{v-1}A_{m+1} \left (\begin{matrix}v \\ m\end{matrix} \right)\right)
\frac{z_\ell^{v}}{v !}.
\end{align*}
From the definition of $A_m$, we have
$$
(R2) =\sum_{v=1}^{\infty} \left(\sum_{m=0}^{v-1} \left(\sum_{n=0}^{\infty}{\mathbb B}_{n, m+1}^{\lambda, \mathbf{k}}\frac{z_h^n}{n!}\left(\begin{matrix}v \\ m\end{matrix}\right) \right ) \right)
\frac{z_\ell^{v}}{v !}.
$$
The third term, $(R3)$, can be calculated similarly:
$$
(R3) = \sum_{v'=1}^{\infty} \left( \sum_{n=0}^{v'-1} \left(\sum_{m=0}^{\infty}{\mathbb B}_{n+1, m}^{\lambda, \mathbf{k}}\frac{z_\ell^m}{m!}\left(\begin{matrix}v' \\ n\end{matrix}\right) \right)\right)
\frac{z_h^{v'}}{v' !}.
$$
Applying a calculation similar to that for (R2) twice to the fourth term, $(R4)$, gives
\begin{align*}
(R4) = &(e^{z_h}-1)(e^{z_\ell}-1)\sum_{n, m=1}^{\infty}
{\mathbb B}_{n, m}^{\lambda, \mathbf{k}}\frac{z_h^{n-1}z_\ell^{m-1}}{(n-1)!(m-1)!}\\
&=\sum_{v'=1}^{\infty}\frac{z_h^{v'}}{v'!}\sum_{v=1}^{\infty}\frac{z_\ell^{v}}{v !}
\sum_{n,m=1}^{\infty}{\mathbb B}_{n, m}^{\lambda, \mathbf{k}}\frac{z_h^{n-1}z_\ell^{m-1}}{(n-1)!(m-1)!}
\\
&=
\left(\sum_{v'=0}^{\infty}\frac{z_h^{v'}}{v'!}-1\right) \left(\sum_{v=0}^{\infty}\frac{z_\ell^{v}}{v!}-1 \right)
\sum_{m=0}^{\infty}\left (\sum_{n=0}^{\infty}{\mathbb B}_{n+1, m+1}^{\lambda, \mathbf{k}}\frac{z_h^{n}}{n!} \right)\frac{z_\ell^{m}}{m!}\\
&=
\left(\sum_{v'=0}^{\infty}\frac{z_h^{v'}}{v'!}-1\right) 
\left(
\sum_{n=0}^{\infty}\sum_{v=1}^{\infty}\sum_{m=0}^{v-1}
{\mathbb B}_{n+1, m+1}^{\lambda, \mathbf{k}}
\left(\begin{matrix}v \\ m\end{matrix} \right)
\frac{z_\ell^{v}}{v!}\frac{z_h^{n}}{n!} \right)\\
&=
\sum_{v'=1}^{\infty}\sum_{n=0}^{v'-1}\sum_{v=1}^{\infty}\sum_{m=0}^{v-1}
{\mathbb B}_{n+1, m+1}^{\lambda, \mathbf{k}}
\left(\begin{matrix}v \\ m\end{matrix} \right)
\left(\begin{matrix}v' \\ n\end{matrix} \right)
\frac{z_\ell^{v}}{v!}\frac{z_h^{v'}}{v'!} .
\end{align*}
Combining $(R1), (R2), (R3)$, and $(R4)$ gives the proposition.
\end{proof}

The next proposition is about the relation between Schur type Bernoulli numbers 
${\mathbb B}_{{n}, m}^{\lambda, \mathbf{k}}$ and ${\mathbb C}_{{n}, m}^{\lambda, \mathbf{k}}$.

\begin{Proposition}
For $\lambda = (h, 1^{\ell -1})$ and $k_{ij}\not=1$ for $(i, j)\in C( \lambda)$, the following holds:
$$
{\mathbb B}_{{n}, m}^{\lambda, \mathbf{k}}={\mathbb C}_{{n}, {m}}^{\lambda, \mathbf{k}}
+{\mathbb C}_{{n}-1, {m}}^{\lambda, 
(k_{\ell 1}, \cdots| k_{11}, \cdots, k_{1h}-1)}
+{\mathbb C}_{{n}, {m}-1}^{\lambda,
(k_{\ell 1}-1, \cdots| k_{11}, \cdots, k_{1h})}
+{\mathbb C}_{{n}-1, {m}-1}^{\lambda,
(k_{\ell 1}-1, \cdots| k_{11}, \cdots, k_{1h}-1)}.
$$
\end{Proposition}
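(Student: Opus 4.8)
The plan is to start from the defining generating series \eqref{defBernoulli} in the hook case and apply the elementary identity $\frac{1}{1-e^{-z}}=1+\frac{1}{e^{z}-1}$ separately in each of the two corner variables $z_h,z_\ell$. Writing $L(z_h,z_\ell)={Li}_{\mathbf{k}}^{\lambda}(1-e^{-z_h},1-e^{-z_\ell})$ for brevity, this factorization splits the left-hand generating function into four terms:
\[
\frac{L}{(1-e^{-z_h})(1-e^{-z_\ell})}=L+\frac{L}{e^{z_h}-1}+\frac{L}{e^{z_\ell}-1}+\frac{L}{(e^{z_h}-1)(e^{z_\ell}-1)}.
\]
The last term is exactly the generating series \eqref{defBernoulliC} of ${\mathbb C}_{n,m}^{\lambda,\mathbf{k}}$, so it already accounts for the first summand on the right-hand side of the claim. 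It then remains to read off the Taylor coefficients of the other three pieces and match them with the three shifted ${\mathbb C}$-numbers.

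Next I would differentiate $L$ by combining Lemma \ref{bibn} with the chain rule. Since $\frac{\partial}{\partial z_h}(1-e^{-z_h})=e^{-z_h}$ and $\frac{e^{-z_h}}{1-e^{-z_h}}=\frac{1}{e^{z_h}-1}$, equations \eqref{nlbibn1}--\eqref{nlbibn3} yield
\[
\frac{\partial L}{\partial z_h}=\frac{1}{e^{z_h}-1}{Li}_{(k_{\ell 1},\cdots|k_{11},\cdots,k_{1h}-1)}^{\lambda}(1-e^{-z_h},1-e^{-z_\ell}),
\]
together with the analogous formula for $\partial_{z_\ell}$ and for the mixed second derivative. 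The key observation is that $\partial_{z_h}\bigl(L/(e^{z_\ell}-1)\bigr)$ is the generating series of ${\mathbb C}^{\lambda,(\cdots|k_{1h}-1)}$, that $\partial_{z_\ell}\bigl(L/(e^{z_h}-1)\bigr)$ is the generating series of ${\mathbb C}^{\lambda,(k_{\ell1}-1,\cdots|\cdots)}$, and that $\partial_{z_h}\partial_{z_\ell}L$ is the generating series of ${\mathbb C}^{\lambda,\mathbf{k}^-}$. Integrating these identities back up in one or both variables and tracking the factorial shift $z^{p}/p!\mapsto z^{p+1}/(p+1)!$ produces exactly the index shifts $n\mapsto n-1$ and $m\mapsto m-1$ attached to the three remaining summands.

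To make the integrations legitimate I would first note that every monomial of ${Li}_{\mathbf{k}}^{\lambda}$ carries a strictly positive power of each corner variable, whence $L(z_h,0)=L(0,z_\ell)=0$; this annihilates all the constants of integration and, since $L$ vanishes on the axes, also shows that $L/(e^{z_h}-1)$ and $L/(e^{z_\ell}-1)$ are holomorphic at the origin despite the simple pole of the denominators. Comparing the coefficient of $z_h^{n}z_\ell^{m}/(n!\,m!)$ on both sides then gives the four-term formula. The point requiring the most care---and the main obstacle---is the bookkeeping of these boundary contributions together with the check that all decremented indices still belong to $T(\lambda,\mathbb{N})$; this is precisely where the hypothesis $k_{ij}\neq 1$ for $(i,j)\in C(\lambda)$ is used, as it forces $k_{1h}-1\geq 1$ and $k_{\ell1}-1\geq 1$ so that ${\mathbb C}^{\lambda,(\cdots|k_{1h}-1)}$, ${\mathbb C}^{\lambda,(k_{\ell1}-1,\cdots|\cdots)}$ and ${\mathbb C}^{\lambda,\mathbf{k}^-}$ are all well defined.
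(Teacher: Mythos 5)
Your proposal is correct and follows essentially the same route as the paper's own proof: the identical four-term splitting coming from $e^{z_h}e^{z_\ell}=((e^{z_h}-1)+1)((e^{z_\ell}-1)+1)$, followed by Lemma \ref{bibn} combined with the chain rule to recognize each piece as (an iterated integral of) a ${\mathbb C}$-type generating series from \eqref{defBernoulliC}, and finally coefficient comparison. The only cosmetic difference is that the paper writes, e.g., ${Li}_{\mathbf{k}}^{\lambda}(1-e^{-z_h},1-e^{-z_\ell})$ directly as a double integral of the $\mathbf{k}^-$ generating series, whereas you differentiate first and then integrate back while checking that the boundary terms vanish --- the same computation, with your treatment of the constants of integration and of the hypothesis $k_{ij}\neq 1$ at the corners being, if anything, slightly more explicit than the paper's.
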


\begin{proof}
From the definition of  Schur type Bernoulli numbers, 
\begin{align}
\sum_{n, m=0}^{\infty}{\mathbb B}_{n, m}^{\lambda, {\bf k}}
\frac{z_h^{n} z_\ell^{m}}{n! m!}
&=
\frac{{Li}_{\bf k}^{\lambda}(1-e^{-z_h})(1-e^{-z_\ell})}{(1-e^{-z_h})(1-e^{-z_\ell})}=e^{z_h}e^{z_\ell}\frac{{Li}_{\bf k}^{\lambda}(1-e^{-z_h}, 1-e^{-z_\ell})}{(e^{z_h}-1)(e^{z_\ell}-1)}\notag\\
&
=\frac{1}{(e^{z_h}-1)(e^{z_\ell}-1)}
\{(e^{z_h}-1)(e^{z_\ell}-1){Li}_{\bf k}^{\lambda}(1-e^{-z_h}, 1-e^{-z_\ell})\notag\\
&
+(e^{z_h}-1){Li}_{\bf k}^{\lambda}(1-e^{-z_h}, 1-e^{-z_\ell})
\notag\\
&
+(e^{z_\ell}-1){Li}_{\bf k}^{\lambda}(1-e^{-z_h}, 1-e^{-z_\ell})\notag\\
&
+{Li}_{\bf k}^{\lambda}(1-e^{-z_h}, 1-e^{-z_\ell})
\}\notag\\
&= {Li}_{\bf k}^{\lambda}(1-e^{-z_h}, 1-e^{-z_\ell})
+\frac{{Li}_{\bf k}^{\lambda}(1-e^{-z_h}, 1-e^{-z_\ell})}{e^{z_\ell}-1}\notag\\
&
+\frac{{Li}_{\bf k}^{\lambda}(1-e^{-z_h}, 1-e^{-z_\ell})}{e^{z_h}-1}
+\frac{{Li}_{\bf k}^{\lambda}(1-e^{-z_h}, 1-e^{-z_\ell})}{(e^{z_h}-1)(e^{z_\ell}-1)}.
\label{LikBnm}
\end{align}
Using \eqref{nlbibn3} with the definition of Bernoulli numbers \eqref{defBernoulliC}, 
\begin{align*}
{Li}_{\bf k}^{\lambda}&(1-e^{-z_h}, 1-e^{-z_\ell})\\
&=\int_{0}^{z_h}\int_0^{z_\ell}
\frac{{Li}_{k_{\ell 1}-1, \cdots| k_{11}, \cdots k_{1h}-1}^\lambda(1-e^{-z_h}, 1-e^{-z_\ell})}
{(e^{z_h}-1)(e^{z_\ell}-1)}
dz_\ell dz_h\\
&=\int_{0}^{z_h}\int_0^{z_\ell}
\sum_{n, m=0}^{\infty}{\mathbb C}_{n, m}^{(k_{\ell 1}-1, \cdots| k_{11}, \cdots, k_{1h}-1)}\frac{z_h^{n}}{{n}!}\frac{{z_\ell}^{m}}{m!}
dz_\ell dz_h\\
&=\sum_{n, m=0}^{\infty}{\mathbb C}_{n, m}^{(k_{\ell 1}-1, \cdots| k_{11}, \cdots, k_{1h}-1)}\frac{z_h^{n+1}}{(n+1)!}\frac{{z_\ell}^{m+1}}{(m+1)!}
\\
&=\sum_{n, m=1}^{\infty}{\mathbb C}_{n-1, m-1}^{(k_{\ell 1}-1, \cdots| k_{11}, \cdots, k_{1h}-1)}\frac{z_h^{n}}{{n}!}\frac{{z_\ell}^{m}}{m!}.
\end{align*}
After a similar calculation using \eqref{nlbibn1} and \eqref{nlbibn2}, comparing the coefficients in the two sides,
for $n, m\geq 1$, gives 
$$
{\mathbb B}_{n, m}^{\lambda, \mathbf{k}}=
{\mathbb C}_{n-1, m-1}^{\lambda(k_{\ell 1}-1, \cdots  | {k_{11}}, \cdots, k_{1h}-1)}
+{\mathbb C}_{n-1, m}^{\lambda(k_{\ell 1}, \cdots| k_{11}, \cdots, k_{1h}-1)}
+{\mathbb C}_{n, m-1}^{\lambda(k_{\ell 1}-1, \cdots| k_{11}, \cdots, k_{1h})}
+{\mathbb C}_{n, m}^{\lambda, \mathbf{k}}.
$$
\end{proof}

Lastly, we discuss a description in terms of the Stirling numbers.
For any integers $n$ and $m$,  Stirling number $\left \{
\begin{matrix} n\\m
\end{matrix}
\right \}$
is defined as follows:
$$
\left \{
\begin{matrix} n+1\\m
\end{matrix}
\right \}
=
\left \{
\begin{matrix} n\\m-1
\end{matrix}
\right \}
+
m \left \{
\begin{matrix} n\\m
\end{matrix}
\right \},
$$
where we set $\left \{
\begin{matrix} 0 \\0
\end{matrix}
\right \} = 1$, 
$\left \{
\begin{matrix} n \\0
\end{matrix}
\right \} = \left \{
\begin{matrix} 0 \\m
\end{matrix}
\right \} = 0 \; ((n, m) \neq (0, 0)) $.
The hook-type Schur  Bernoulli numbers can be written in terms of the Stirling numbers.
\begin{Proposition} 
For $h, \ell$ $\in \mathbb{N}$, $\lambda = (h, 1^{\ell-1})$, and $ \mathbf{k} = (k_{ij}) \in \mathbb{Z}^{|\lambda|} $, the following holds:
 $$
{\mathbb B}_{n, m}^{\lambda, \mathbf{k}}
 =
 \sum_{\substack{m_{11}\leq \cdots m_{1m} \leq m+1 \\  m_{11}<\cdots < m_{\ell 1}\leq n+1 }}
  \frac{(-1)^{m_{1h}+m_{\ell 1}+n+m}(m_{1h}-1)!(m_{\ell 1}-1)!}{m_{11}^{k_{11}}\cdots m_{1h}^{k_{1h}}\cdots m_{\ell 1}^{k_{\ell 1}}}
\left \{
\begin{matrix} n\\
m_{1m}-1
\end{matrix}
\right \}
\left \{
\begin{matrix}m\\
m_{\ell 1}-1
\end{matrix}
\right \}
.
$$
\end{Proposition}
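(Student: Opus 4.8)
The plan is to compute $\mathbb{B}_{n,m}^{\lambda,\mathbf{k}}$ directly from its generating function \eqref{defBernoulli}, specialized to the hook $\lambda=(h,1^{\ell-1})$, where $c=|C(\lambda)|=2$ and the two corner variables are $z_h=z_{1h}$ and $z_\ell=z_{\ell1}$. First I would expand the numerator via the defining sum \eqref{schurpolylog}: for a hook the $\mathrm{SSYT}(\lambda)$ condition is $m_{11}\le\cdots\le m_{1h}$ together with $m_{11}<\cdots<m_{\ell1}$, and the corner exponents are exactly $m_{1h}$ (on $z_h$) and $m_{\ell1}$ (on $z_\ell$). Dividing by $(1-e^{-z_h})(1-e^{-z_\ell})$ then simply lowers each corner exponent by one, giving
\begin{equation*}
\frac{Li_{\mathbf{k}}^\lambda(1-e^{-z_h},1-e^{-z_\ell})}{(1-e^{-z_h})(1-e^{-z_\ell})}=\sum_{\substack{m_{11}\le\cdots\le m_{1h}\\ m_{11}<\cdots<m_{\ell1}}}\frac{(1-e^{-z_h})^{m_{1h}-1}(1-e^{-z_\ell})^{m_{\ell1}-1}}{m_{11}^{k_{11}}\cdots m_{1h}^{k_{1h}}\cdots m_{\ell1}^{k_{\ell1}}}.
\end{equation*}

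The key tool is the exponential generating function of the Stirling numbers of the second kind defined by the recurrence in the text, which I would record in the form
\begin{equation*}
(1-e^{-z})^{j}=j!\sum_{N\ge j}(-1)^{N+j}\left\{\begin{matrix}N\\ j\end{matrix}\right\}\frac{z^{N}}{N!}.
\end{equation*}
This follows from the standard identity $\tfrac{(e^{t}-1)^{j}}{j!}=\sum_{N\ge j}\left\{\begin{smallmatrix}N\\ j\end{smallmatrix}\right\}\tfrac{t^{N}}{N!}$ (itself a direct consequence of the stated recurrence) upon setting $t\mapsto-z$ and using $(1-e^{-z})^{j}=(-1)^{j}(e^{-z}-1)^{j}$. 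Applying it with $j=m_{1h}-1$ in the variable $z_h$ and with $j=m_{\ell1}-1$ in $z_\ell$ replaces each corner factor by a power series carrying the factorials $(m_{1h}-1)!,(m_{\ell1}-1)!$ and the signs $(-1)^{n+m_{1h}-1},(-1)^{m+m_{\ell1}-1}$, whose product is $(-1)^{n+m+m_{1h}+m_{\ell1}}$.

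Substituting the two expansions into the SSYT sum and reading off the coefficient of $\frac{z_h^{n}z_\ell^{m}}{n!\,m!}$ then yields precisely the claimed formula, with the Stirling factors $\left\{\begin{smallmatrix}n\\ m_{1h}-1\end{smallmatrix}\right\}$ and $\left\{\begin{smallmatrix}m\\ m_{\ell1}-1\end{smallmatrix}\right\}$. Here I would note that these factors vanish unless $m_{1h}-1\le n$ and $m_{\ell1}-1\le m$, so the upper bounds $m_{1h}\le n+1$ and $m_{\ell1}\le m+1$ on the corner entries are enforced automatically and may simply be recorded in the summation range. The argument is formal and mechanical, legitimate at the level of formal power series where term-by-term insertion and reordering of the summations are harmless; the only real point of care—and the mildest of obstacles—is tracking the parity of the combined sign, which arises as the product of two independent Stirling sign contributions together with the shift produced by dividing out the corner factors, so that one must confirm that $(-1)^{n+m_{1h}-1}(-1)^{m+m_{\ell1}-1}$ collapses to the stated $(-1)^{m_{1h}+m_{\ell1}+n+m}$.
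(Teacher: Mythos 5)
Your proposal is correct and takes essentially the same route as the paper's own proof: expand the generating function \eqref{defBernoulli} as an SSYT sum with corner factors $(1-e^{-z_h})^{m_{1h}-1}(1-e^{-z_\ell})^{m_{\ell 1}-1}$, apply the Stirling identity $\frac{(e^t-1)^j}{j!}=\sum_{N\ge j}\left\{\begin{matrix}N\\ j\end{matrix}\right\}\frac{t^N}{N!}$ with $t\mapsto -z$, and compare coefficients of $\frac{z_h^n z_\ell^m}{n!\,m!}$. The only cosmetic difference is that the paper performs the index shifts explicitly via substitutions $\alpha=p-m_{1h}+1$, $\beta=q-m_{\ell 1}+1$, while you read off the coefficient directly and let the vanishing of the Stirling numbers enforce the bounds $m_{1h}\le n+1$, $m_{\ell 1}\le m+1$; the sign bookkeeping agrees in both.
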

\begin{proof}
From the definition of the Bernoulli numbers,
\begin{align*}
&\sum_{n, m=0}^{\infty}{\mathbb B}_{n, m}^{\lambda, \mathbf{k}}\frac{z_n^{n} z_\ell^{m}}{n! m!}
=\frac{{Li}_{\bf k}(1-e^{-z_h}, 1-e^{-z_\ell})}{(1-e^{-z_h})(1-e^{-z_\ell})}\\
&=\sum_{\substack{m_{11}\leq \cdots  \leq m_{1h}\\ m_{11}<\cdots < m_{\ell 1}}}
\frac{(1-e^{-z_h})^{m_{1h}-1}(1-e^{-z_\ell})^{m_{\ell 1}-1}}{m_{\ell 1}^{k_{\ell 1}}\cdots m_{11}^{k_{11}}\cdots m_{1h}^{k_{1h}}}\\
&=\sum \frac{(-1)^{(m_{1h}-1)+(m_{\ell 1}-1)}(m_{1h}-1)!(m_{\ell 1}-1)!}{m_{\ell 1}^{k_{\ell 1}}\cdots m_{11}^{k_11}\cdots m_{1h}^{k_{1h}}}
\frac{(e^{-z_h}-1)^{m_{1h}-1}}{(m_{1h}-1)!}\frac{(e^{-z_\ell}-1)^{m_{\ell 1}-1}}{(m_{\ell 1}-1)!}\\
&=\sum \frac{(-1)^{m_{1h}+m_{\ell 1}}(m_{1h}-1)!(m_{\ell 1}-1)!}{m_{\ell 1}^{k_{\ell 1}}\cdots m_{11}^{k_{11}}\cdots m_{1h}^{k_{1h}}}\cdot\\
&\hspace{1cm}
\sum_{p=m_{1h}-1}^{\infty}
\left \{
\begin{matrix} p\\
m_{1h}-1
\end{matrix}
\right \}
\frac{(-z_h)^p}{p!}\sum_{q=m_{\ell 1}-1}^{\infty} 
\left \{
\begin{matrix} q\\
m_{\ell 1}-1
\end{matrix}
\right \}
\frac{(-z_\ell)^q}{q!}.
\end{align*}
For the last equality, we used Proposition 2.6 (7) in $\cite{AK2}$:
$$
\frac{(e^t-1)^m}{m!}=\sum_{n=m}^{\infty}
\left \{
\begin{matrix} n\\
m
\end{matrix}
\right \}
\frac{t^n}{n!}.
$$
If we set $p-m_{1h}+1=\alpha$ and $q-m_{\ell 1}+1=\beta$,
 then the last equality is equivalent to
\begin{eqnarray*}
\sum \frac{(-1)^{m_{1h}+m_{\ell 1}}(m_{1h}-1)!(m_{\ell 1}-1)!}{m_{\ell 1}^{k_{\ell 1}}\cdots m_{11}^{k_{11}}\cdots m_{1h}^{k_{1h}}}
\sum_{\alpha=0, \beta=0}^{\infty}
\left \{
\begin{matrix} \alpha+m_{1h}-1\\
m_{1h}-1
\end{matrix}
\right \}
\left \{
\begin{matrix} \beta+m_{\ell 1}-1\\
m_{\ell 1}-1
\end{matrix}
\right \} \\
\frac{(-z_h)^{\alpha+m_{1h}-1}}{(\alpha+m_{1h}-1)!}
\frac{(-z_\ell)^{\beta+m_{\ell 1}-1}}{(\beta+m_{\ell 1}-1)!}.
\end{eqnarray*}
Replacing $\alpha+m_{1h}$ by $n$ and $\beta+m_{\ell 1}$ by $m$, we have
\begin{align*}
& =\sum_{n, m=1}^{\infty}
 \sum_{\substack{m_{11}\leq \cdots  < m_{1h} \leq m \\ m_{11}<\cdots < m_{\ell 1} \leq n}}
  \frac{(-1)^{m_{1h}+m_{\ell 1}}(m_{1h}-1)!(m_{\ell 1}-1)!}{m_{\ell 1}^{k_{\ell 1}}\cdots m_{11}^{k_11}\cdots m_{1h}^{k_{1h}}}
  \left \{
\begin{matrix} n-1\\
m_{1h}-1
\end{matrix}
\right \}
\left \{
\begin{matrix}m-1\\
m_{\ell 1}-1
\end{matrix}
\right \}\\
& \hspace{10cm}\cdot 
\frac{(-z_h)^{n-1}(-z_\ell)^{m-1}}{(n-1)!(m-1)!}\\
& =\sum_{n, m=0}^{\infty}
 \sum_{\substack{m_{11}\leq \cdots \leq m_{1h}\leq m+1 \\ m_{11}<\cdots < m_{\ell 1}\leq n+1}}
  \frac{(-1)^{m_{1h}+m_{\ell 1}}(m_{1h}-1)!(m_{\ell 1}-1)!}{m_{\ell 1}^{k_{\ell 1}}\cdots m_{11}^{k_{11}}\cdots m_{1h}^{k_{1h}}}
\left \{
\begin{matrix} n\\
m_{1h}-1
\end{matrix}
\right \}
\left \{
\begin{matrix}m\\
m_{\ell 1}-1
\end{matrix}
\right \}\\
&\hspace{10cm}
\cdot \frac{(-z_n)^{n}(-z_\ell)^{m}}{n!m!}.
 \end{align*}
 Comparing the coefficients of the two sides, we have the proposition.
 \end{proof}

\section*{Acknowledgement}
 We would like to express our appreciation to  
 Prof. Yasuo Ohno for his guidance and helpful comments in $C$-type Bernoulli numbers  
 and Prof. Hirofumi Tsumura for pointing out a mistake in convergence and his meaningful suggestion for our work.
 This work was supported in part by JSPS KAKENHI Grant Number JP18K03223.

\bigskip

\noindent
\textsc{Naoki Nakamura}\\
 Department of Information and Communication Science, Faculty of Science, \\
 Sophia University, Tokyo, Japan \\

\medskip

\noindent
\textsc{Maki Nakasuji}\\
 Department of Information and Communication Science, Faculty of Science, \\
 Sophia University, Tokyo, Japan \\
 \texttt{nakasuji@sophia.ac.jp}

\end{document}